\newtheorem{theorem}{Theorem}[section]
\newtheorem{lemma}[theorem]{Lemma}
\newtheorem{corollary}[theorem]{Corollary}
\newtheorem{proposition}[theorem]{Proposition}
\newtheorem{example}[theorem]{Example}
\theoremstyle{definition}
\newtheorem{remark}[theorem]{Remark}
\newtheorem{definition}[theorem]{Definition}
\newcommand{\BbC}{\mathbb{C}}
\newcommand{\NN}{\mathbb{N}}
\newcommand{\ZZ}{\mathbb{Z}}
\newcommand{\CC}{\mathbb{C}}
\newcommand{\grdr}{{\rm gr}}
\DeclareMathOperator{\lm}{LM}
\renewcommand{\d}{\partial}
\title[On Noncommutative Finite Factorization Domains]{On Noncommutative Finite Factorization Domains}
\author{Jason P.  Bell}
\address{Department of Pure Mathematics\\
        University of Waterloo\\
        Waterloo, ON, Canada}
       \email{jpbell@uwaterloo.ca}
        \author{Albert Heinle}
     \address{David R. Cheriton School of Computer Science\\
             University of Waterloo\\
        Waterloo, ON, Canada}
        \email{aheinle@uwaterloo.ca}
\author{Viktor Levandovskyy}
\address{Lehrstuhl D f\"ur  Mathematik \\
  RWTH Aachen University\\
  Aachen,  Germany}
  \email{viktor.levandovskyy@math.rwth-aachen.de}
\thanks{The first two authors thank NSERC for its generous support}
\keywords{finite factorization domains, Weyl algebras, quantum affine spaces, Lie algebras, factorization}
\date{\today}
\begin{document}
\begin{abstract} A domain $R$ is said to have the finite factorization
  property if every nonzero non-unit element of $R$ has at least one
  and at most finitely many distinct factorizations up to
  multiplication of irreducible factors by central units. 
 Let $k$ be an algebraically closed field and let $A$ be a
 $k$-algebra.  We show that if $A$ has an associated graded ring that
 is a domain with the property that the dimension of each homogeneous
 component is finite then $A$ is a finite factorization domain. 
 As a corollary, we show that many
 classes of algebras have the finite factorization property, including
 Weyl algebras, enveloping algebras of finite-dimensional Lie
 algebras, quantum affine spaces and shift algebras. 
 %new by VL
 This provides a termination criterion for factorization procedures over these algebras.  
 In addition, we give explicit upper bounds on
 the number of distinct factorizations of an element in terms of data
 from the filtration. 
 \end{abstract}

\maketitle
\tableofcontents

\section{Introduction}
The study of counting the number of factorizations of elements in
rings has been a topic of great investigation
\citep{anderson1990factorization, anderson1992elasticity, anderson1996finite, tsarev1996algorithm, anderson1997factorization}.  The ideal behaviour in this setting
occurs in unique factorization domains, where up to multiplication by
central units in the rings and reordering of factors, there is exactly one
factorization of a nonzero, non-unit element into (finitely many) irreducible factors.
Consequently, if one studies distinct factorizations of elements into
factors then there are only finitely many such decompositions.  In
general, a (not necessarily commutative) domain with the property that
each nonzero element that is not a unit has at least one and only
finitely many distinct factorizations (see Definition \ref{def: FFD}
for a precise definition) is called a \emph{finite factorization
domain} (FFD, for short). In the commutative setting, factorization properties of
several types of integral domains have been studied in the past. A
very extensive collection of terminology has been developed in
\citep{anderson1997factorization} to
categorize these
domains. Furthermore, necessary and sufficient conditions for the property of
being a finite factorization domain have been discovered. For example,
every Krull domain is a finite factorization domain and a polynomial
ring over a finite factorization domain is again a finite
factorization domain \citep{anderson1990factorization}.
It is to be remarked that the community dealing with
factorization of commutative domains refined the span between unique factorization domains
and domains, where infinitely many factorizations of a single element
are possible, even further. We content ourselves in this paper
to identify $k$-algebras, where $k$ is a field, with the finite factorization
domain property. Up till now considerably less is known in the
noncommutative setting. P.~M.~Cohn proved that the free associative algebra over a field
is even a unique factorization domain \citep[Theorem 6.3]{cohn1963noncommutative}. 
In the recent paper by Baeth and Smertnig \citep{BS14}, containing an overview of recent results, 
a generalization of commutative theory as of Anderson et al. as well as its limits is provided. Typical examples considered
in that paper are matrix rings over certain domains. Perhaps the best-known result 
%of this type 
for a noncommutative algebra being a finite factorization domain (that is, on the other side of the
spectrum of algebras in certain sense) is the result of S.~Tsarev \citep{tsarev1996algorithm},
from which it can be derived, that the first Weyl algebra over $\BbC$
--- that is  an algebra with generators $x$ and $\partial$ and with
the relation $\partial x-x\partial=1$ --- is a finite factorization domain.  

%% albert's formulation
%Perhaps the best-known result of this type for a noncommutative ring
%is the result of Tsarev {\bf CITATION}, which states an enumeration
%algorithm for all factorizations of elements in the first rational
%Weyl algebra, which is the complex algebra with generators $x$ and
%$\partial$ subject to the noncommutative relation $\partial
%x-x\partial=1$ and which is localized at $x$. One can derive from the
%results that without the localization at $x$, the first Weyl algebra is
%a finite factorization domain.

Nonetheless, the proof that the $n$-th Weyl algebra with $n\geq 2$ is still a
finite factorization domain is more involved. In this paper, we will
show for a broad class of algebras (which ubiquitously appear in applications of e.~g. algebraic analysis), including the $n$-th
Weyl algebras, that their nonzero non-unit elements have only 
finitely many distinct factorizations up to multiplication by a
central unit.

%In this paper, we extend Tsarev's result to a broad class of rings.  

To describe our generalization, we must briefly discuss the notions of a filtration of an algebra and of a graded algebra.  Let $k$ be a field and let $A$ be a $k$-algebra.   We say that $A$ has an ($\NN$-)\emph{filtration} if $A$ is a union of $k$-subspaces $V=\{V_n \colon n\in\NN\}$ such that
$V_n\subseteq V_m$ whenever $n\leq m$ and $V_nV_m\subseteq V_{n+m}$ for
all $m,n\ge 0$. If the stronger property $A =
\bigoplus_{n=0}^{\infty}V_n$ holds, i.e. $A$ is not just a union of
the $V_i$ but their direct sum for $i \in \NN$, we say that $A$ has an
$\NN$-\emph{grading} and refer to $A$ as \emph{graded algebra}.

%We recall that a monoid $\Gamma$ is \emph{ordered} if it is endowed with a total ordering $\prec$ such that whenever $\gamma_1\prec \gamma_2$, we have $\gamma \gamma_1\prec \gamma \gamma_2$ and $\gamma_1\gamma \prec \gamma_2\gamma$ for all $\gamma\in \Gamma$.  We then say that $A$ has a \emph{filtration by} $\Gamma$ if $A$ is a union of $k$-vector subspace $V=\{V_{\gamma}\colon \gamma\in \Gamma\}$ such that for all $\gamma_1,\gamma_2\in \Gamma$ we have:
%\begin{enumerate}
%\item[(i)] $V_{\gamma_1}\subseteq V_{\gamma_2}$ whenever $\gamma_1\prec \gamma_2$;
%\item[(ii)] $V_{\gamma_1}V_{\gamma_2}\subseteq V_{\gamma_1\gamma_2}$;
%
%\end{enumerate}
To any filtration one can form an \emph{associated graded
  algebra}, denoted by ${\rm gr}_V(A)$, which is given by
$${\rm gr}_V(A) = \bigoplus_{n=0}^{\infty} V_{n}/V_{n-1},$$ where we take $V_{-1}=\{0\}$.  It is well-known that many of the properties of the associated graded algebra can be lifted to $A$.  For example, a classical result
%straightforward to show 
shows that if the associated graded algebra of $A$ is a (not
necessarily commutative) domain then so is $A$ \citep[Proposition
  1.6.6]{mcconnell2001noncommutative}. We say that a filtration (resp. a grading) $\{V_n \colon n\in\NN\}$ is \emph{finite-dimensional} if each $V_{n}$ is a finite-dimensional $k$-vector space.
%$V_{\prec \gamma}$ to be the sum (which is also the union since our monoid is totally ordered) of all $V_{\lambda}$ with $\lambda$ strictly less than $\gamma$ (we define $V_{\prec \gamma}$ to be $(0)$ if there does not exist $\lambda$ strictly less than $\gamma$.   It is well-known that many of the properties of the associated graded ring can be lifted to $A$.  For example, it is straightforward to show that if the associated graded ring of $A$ is an integral domain, then so is $A$.  We say that a filtration $V=\{V_{\gamma}\}_{\gamma\in \Gamma}$ is \emph{finite-dimensional} if each $V_{\gamma}$ with $\gamma\in \Gamma$ is a finite-dimensional $k$-vector space.  For most of the applications we will be dealing with the case where $\Gamma=\mathbb{N}_0^d$ for some $d\ge 1$, where $\mathbb{N}_0=\{0,1,2,\ldots \}$ and the ordering $\prec$ on $\mathbb{N}_0^d$ is the ``degree lexicographic'' order given by $(e_1,\ldots ,e_d)\preceq (e_1',\ldots ,e_d')$ if and only if either $\sum e_i < \sum e_i'$ or $\sum e_i = \sum e_i'$ and if the tuples are not equal and $j$ is the first index $i$ for which $e_i\neq e_i'$ then $e_j<e_j'$.  
Our main result is the following.
\begin{theorem}
Let $k$ be an algebraically closed field and let $A$ be a $k$-algebra. 
If there exists a finite-dimensional filtration $\{V_n \colon n\in\NN\}$ on $A$ such that the associated graded algebra $B={\rm gr}_V(A)$ is a (not necessarily commutative) domain over $k$, then $A$ is a finite factorization domain over $k$.\\
\label{thm: main}
\end{theorem}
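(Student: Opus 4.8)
The plan is to reduce, in three steps, to a single geometric statement about the left divisors of one homogeneous element.

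\emph{Step 1 (normalisation).} For $a\ne0$ write $\mathrm{ord}(a)=\min\{n:a\in V_n\}$ and let $\sigma(a)$ be the image of $a$ in $V_{\mathrm{ord}(a)}/V_{\mathrm{ord}(a)-1}$. Since $B=\mathrm{gr}_V(A)$ is a domain, $\sigma(a)\sigma(a')\ne0$ for all nonzero $a,a'$, and this element is $\sigma(aa')$; hence $A$ is a domain (recovering the cited fact) and $\mathrm{ord}(aa')=\mathrm{ord}(a)+\mathrm{ord}(a')$. The filtration axioms together with $B$ being a domain force $1\in V_0$ (otherwise $\sigma(1)^2=0$ in $B$), so $V_0$ is a finite-dimensional domain over the algebraically closed field $k$; therefore $V_0=k$ and the units of $A$ are exactly $k^{\times}$, which are central. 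Consequently every factorisation of a nonzero non-unit $a$ into non-units has at most $\mathrm{ord}(a)$ factors, and factorisations into irreducibles exist; it remains to bound their number up to rescaling of the factors by elements of $k^{\times}$.

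\emph{Step 2 (reduction to the graded case).} Pass to the Rees algebra $\mathcal{R}=\bigoplus_{n\ge0}V_nt^n\subseteq A[t]$: it is a graded domain over $k$, each $\mathcal{R}_n=V_nt^n$ is finite-dimensional, and $\mathcal{R}_0=k$. By additivity of $\mathrm{ord}$ a factorisation $a=a_1\cdots a_r$ in $A$ lifts to $(a_1t^{d_1})\cdots(a_rt^{d_r})$ with $d_i=\mathrm{ord}(a_i)$; conversely, in any graded domain a factor of a homogeneous element is homogeneous (compare the lowest and highest graded pieces of a product), so any factorisation of $a\,t^{\mathrm{ord}(a)}$ in $\mathcal{R}$ has homogeneous factors $a_it^{e_i}$ with $\sum e_i=\mathrm{ord}(a)$, forcing $\mathrm{ord}(a_i)=e_i$. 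Since the non-unit homogeneous elements of $\mathcal{R}$ are exactly those of positive degree, irreducibility matches on the two sides, and the factorisations of $a$ in $A$ up to $k^{\times}$ correspond bijectively to those of $a\,t^{\mathrm{ord}(a)}$ in $\mathcal{R}$. Hence it suffices to prove the theorem when $A=B$ is a graded domain with finite-dimensional homogeneous components, dealing only with homogeneous elements and their (necessarily homogeneous) factorisations.

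\emph{Step 3 (a dimension estimate).} Fix homogeneous $h\in B$ of degree $d$. Each factorisation into irreducibles has length $\le d$ and a degree sequence $(d_1,\dots,d_r)$ with $d_i\ge1$, $\sum d_i=d$, and there are finitely many such sequences. For a fixed one, $X=\{(h_1,\dots,h_r)\in B_{d_1}\times\cdots\times B_{d_r}:h_1\cdots h_r=h\}$ is a Zariski-closed subvariety of a finite-dimensional affine space, on which the torus $T=\{(t_1,\dots,t_r)\in(k^{\times})^r:\prod t_i=1\}\cong(k^{\times})^{r-1}$ acts by rescaling the factors; since each $h_i\ne0$ the action is free, so every orbit has dimension $r-1$, and $X$ is a finite union of orbits — equivalently, the number of these factorisations up to $k^{\times}$ is finite, and equals the number of irreducible components of $X$ — exactly when $\dim X\le r-1$. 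Projecting onto the first coordinate, the fibre over $h_1$ is the length-$(r-1)$ factorisation variety of $h_1^{-1}h$, of dimension $\le r-2$ by induction on $r$, while the image lies in $L_{e}(h):=\{b\in B_{e}:b\text{ left-divides }h\}$ with $e=d_1$. So the whole theorem reduces to: for homogeneous $h\in B$ and every $e$, the variety $L_e(h)$ has dimension at most $1$, i.e. $h$ has only finitely many homogeneous left divisors up to a scalar.

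I expect this last statement to be the crux, and it is here that both hypotheses are indispensable: finite-dimensionality of the $B_n$ makes it a finite-type geometric problem, and algebraic closedness of $k$ is genuinely needed, since it already fails over $\QQ$ — in $\QQ+x\,\QQ(\sqrt{2})[x]$ the element $x^2$ has a two-dimensional family of linear left divisors. A first observation is that $L_e(h)$ is a \emph{closed} cone in $B_e$: if $b_s\to b$ with $h=b_sc_s$, the complements $c_s$ cannot degenerate to infinity, since a product of two nonzero homogeneous elements of a domain is nonzero, so $c_s$ converges and $b$ again left-divides $h$. Thus, were $\dim L_e(h)\ge2$, its projectivisation would contain a complete curve, yielding a one-parameter family of pairwise non-proportional left divisors of the fixed element $h$, and the task is to rule this out; I would attack this by showing that such a family forces $h$ to admit factorisations of unbounded length, contradicting Step 1. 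Making the contradiction quantitative — for instance a B\'ezout-type estimate for the number of components of the bilinearly-defined variety $L_e(h)$ — should moreover produce the effective upper bounds announced in the introduction.
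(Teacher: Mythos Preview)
Your Steps 1--3 are sound, and the Rees-algebra reduction to the graded case is a clean touch not present in the paper. You have correctly isolated the crux: for homogeneous $h$, the cone $L_e(h)$ of degree-$e$ left divisors has dimension at most $1$, equivalently the two-factor variety $\{(b,c)\in B_e\times B_{d-e}:bc=h\}$ is finite modulo the scaling torus. This is precisely the content of the paper's Proposition~\ref{lem: vs}.

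However, your proposed attack on this crux does not work. First, the closedness argument for $L_e(h)$ uses convergence language (``$b_s\to b$'', ``$c_s$ cannot degenerate to infinity'') that has no meaning over an arbitrary algebraically closed field; the conclusion is true, but one should argue algebraically, e.g.\ via rank conditions on the left-multiplication map $\ell_b:B_{d-e}\to B_d$, which is injective for every nonzero $b$ since $B$ is a domain. More seriously, the proposed contradiction --- that a one-parameter family of divisors would force $h$ to admit factorisations of unbounded length --- has no evident mechanism. A curve of factorisations $h=b_tc_t$ gives many \emph{two}-term factorisations, but there is no reason the complements $c_t$ should themselves acquire positive-dimensional divisor varieties, so no recursion gets started; and length is bounded by $d$ regardless, so ``unbounded length'' is not even the right shape of contradiction.

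The paper's argument for this step is entirely different, and it is here that the hypothesis $k=\bar k$ is genuinely used. Given a positive-dimensional irreducible component $Z$ of the two-factor variety, one passes to the function field $k(Z)$ and obtains a single factorisation $a\otimes 1=\mathbf{b}\cdot\mathbf{c}$ inside $A\otimes_k k(Z)$ whose specialisations at closed points of $Z$ recover the whole family. One then chooses a discrete valuation $\nu$ on $k(Z)$ for which some coordinate function $g_j$ has $\nu(g_j)<0$ (possible precisely because $Z$ is positive-dimensional), clears denominators by a power of a uniformiser $\theta$, and reduces modulo the maximal ideal of the valuation ring. The left side $a\otimes\theta^{m_1+m_2}$ dies, while the reductions of $\mathbf{b}$ and $\mathbf{c}$ survive as nonzero elements, producing zero divisors in $A\otimes_k K$ for the residue field $K$. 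This contradicts Lemma~\ref{lem: val} (proved via the Nullstellensatz, hence requiring $k=\bar k$) that $A\otimes_k K$ is a domain for every extension field $K$. Your ``unbounded length'' heuristic does not touch this mechanism: the heart of the proof is a degeneration in a tensor-product algebra, not a combinatorial statement about factorisation lengths inside $B$ itself.
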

An immediate corollary of this is that an $\mathbb{N}$-graded domain over an algebraically closed field whose homogeneous components are finite-dimensional is an FFD.  This may appear to be surprising since there are examples of graded domains that are not FFDs---these examples, however, all have some infinite-dimensional homogeneous component.   In general, when one does not work over an algebraically closed base field, one can still in practice use this result, if one has that the algebras involved behave well under extension of scalars.  A useful corollary is then the following result.
\begin{corollary}
Let $k$ be a field and let $A$ be a $k$-algebra. 
If there exists a finite-dimensional filtration $\{V_n \colon n\in\NN\}$ on $A$ such that the associated graded algebra $B={\rm gr}_V(A)$ has the property that $B\otimes_k \bar{k}$ is a (not necessarily commutative) domain, then $A$ is a finite factorization domain.\\
\label{thm: main2}
\end{corollary}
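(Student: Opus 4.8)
The plan is to reduce the statement to Theorem~\ref{thm: main} by extending scalars to an algebraic closure $\bar{k}$ of $k$, and then to descend the finite factorization property from $A\otimes_k\bar{k}$ back down to $A$.

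First I would base change the filtration. Put $R=A\otimes_k\bar{k}$ and $V_n'=V_n\otimes_k\bar{k}$. Since $\bar{k}$ is free, hence flat, over $k$, the $V_n'$ form a finite-dimensional filtration of $R$ (with $\dim_{\bar{k}}V_n'=\dim_k V_n$), and tensoring the exact sequences $0\to V_{n-1}\to V_n\to V_n/V_{n-1}\to 0$ with $\bar{k}$ gives $\mathrm{gr}_{V'}(R)\cong B\otimes_k\bar{k}$, a domain by hypothesis. So Theorem~\ref{thm: main} applies and $R$ is a finite factorization domain over $\bar{k}$. Since $\bar{k}$ is faithfully flat over $k$, the map $A\to R$ is injective, so $A$ is a domain, and since $B$ is a domain (it embeds in $B\otimes_k\bar{k}$) the leading-term construction equips $A\setminus\{0\}$ with a degree function $\deg\colon A\setminus\{0\}\to\NN$ satisfying $\deg(xy)=\deg(x)+\deg(y)$, whose value on an element of $A$ is unchanged when that element is viewed in $R$. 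Hence the units of $A$ are precisely the nonzero elements of $V_0$, which is a finite-dimensional domain over $k$ and therefore a division ring, and likewise the units of $R$ are the nonzero elements of $V_0\otimes_k\bar{k}$; in particular every $A$-irreducible is a non-unit of $R$. Finally, any non-unit $a\in A$ has $\deg a\geq 1$ and any factorization of it into non-units has length at most $\deg a$, so repeatedly splitting reducible factors terminates and $a$ has at least one factorization into $A$-irreducibles.

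For finiteness, fix a non-unit $a\in A$ and write $\sim_A,\sim_R$ for the associate equivalence of Definition~\ref{def: FFD} in $A$ and in $R$; a central unit of $A$ is a central unit of $R$, so an $A$-factorization of $a$, read in $R$, is a factorization into $R$-non-units, and $\sim_A$-equivalent ones become $\sim_R$-equivalent. To an $A$-factorization $a=p_1\cdots p_r$ into $A$-irreducibles I would assign the multiset $\{[p_1]_{\sim_R},\dots,[p_r]_{\sim_R}\}$ of associate classes of its factors in $R$. This is well defined on $\sim_A$-classes, and its range is finite: $r\leq\deg a$, and refining each $p_i$ into $R$-irreducibles exhibits $[p_i]_{\sim_R}$ as an ordered product of at most $\deg a$ elements, each of which is, up to a central unit of $R$, an irreducible factor of one of the finitely many (by the previous paragraph) irreducible factorizations of $a$ in $R$, so only finitely many such products occur up to $\sim_R$. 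The assignment is also finite-to-one, which reduces to the claim that if $p,p'\in A$ are associate in $R$ then they are associate in $A$: writing $p'=up$ with $u$ a central unit of $R$, one has $u\in Z(R)=Z(A)\otimes_k\bar{k}$ and $\deg u=0$, so $u$ lies in $F\otimes_k\bar{k}$ where $F:=Z(A)\cap V_0$ is a finite field extension of $k$; since $Fp$ is a $k$-subspace of $A$, the identity $A\cap(W\otimes_k\bar{k})=W$ for $k$-subspaces $W\subseteq A$ forces $p'\in Fp$, i.e.\ $p'=fp$ for a central unit $f$ of $A$. Thus each multiset has at most $r!$ preimages, so $a$ has only finitely many factorizations into $A$-irreducibles up to $\sim_A$ and $A$ is a finite factorization domain.

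The routine ingredients are the base change of the filtration and the bounded-length argument for existence of factorizations; the real obstacle I anticipate is the last claim of the finiteness argument, namely that elements of $A$ which are associate in $A\otimes_k\bar{k}$ are already associate in $A$. Establishing it cleanly requires the three facts invoked there: that forming centers commutes with the base change, $Z(A\otimes_k\bar{k})=Z(A)\otimes_k\bar{k}$; that units have degree $0$, so the connecting central unit of $R$ is trapped inside the finite extension $F\otimes_k\bar{k}$ of the field $F$; and that a $k$-subspace of $A$ is recovered from its $\bar{k}$-span by intersecting back with $A$.
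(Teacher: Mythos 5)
Your proof is correct, and its core strategy coincides with the paper's: base-change the filtration to $\bar{k}$, invoke Theorem~\ref{thm: main} to get that $R:=A\otimes_k\bar{k}$ is an FFD, and then descend the finiteness to $A$ by showing that elements of $A$ which become associate in $R$ were already associate in $A$. The implementation of both halves differs, though. For the descent lemma, the paper writes $p_1\otimes c = p_2\otimes 1$ with $c\in\bar{k}^*$ and reads off $c\in k$ directly from the fact that $R$ is a free $A$-module with basis a $k$-basis of $\bar{k}$ containing $1$; you instead pass through $Z(R)=Z(A)\otimes_k\bar{k}$, locate the connecting unit in $F\otimes_k\bar{k}$ with $F=Z(A)\cap V_0$, and use the identity $A\cap(W\otimes_k\bar{k})=W$. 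These are two faces of the same faithful-flatness fact, and in fact your $F$ is forced to equal $k$ here (since $V_0\otimes_k\bar{k}$ is a finite-dimensional $\bar{k}$-domain, hence $\bar{k}$, so $V_0=k$), which collapses your version to the paper's. For the finiteness bookkeeping, the paper reduces to two-element factorizations and notes that infinitely many of them would give infinitely many distinct two-element factorizations of $a\otimes 1$ in $R$; you instead assign to each irreducible $A$-factorization the multiset of $\sim_R$-classes of its factors and argue the map has finite image and fibers of size at most $r!$. Your version is a bit heavier but has the side benefit of making the passage from two-element to full factorizations, and the existence of irreducible factorizations in $A$ (which the paper leaves implicit), explicit and self-contained. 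Both arguments are sound.
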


In the body of the paper, we will show that an extended version of
Theorem \ref{thm: main} holds (see Remark \ref{rem: monoid}) that
considers more general filtrations.
%, albeit for most natural applications Theorem \ref{thm: main} is sufficient.  
Although our main
result has what might appear to be a technical condition on the
algebra having a filtration with the property that the associated
graded algebra is a domain, rings of this sort are actually ubiquitous
in both noncommutative and commutative algebra and thus Theorem
\ref{thm: main} applies to a large class of algebras.  For example, if
one takes the first Weyl algebra $A_1$ over a field $k$ of characteristic
zero, described above, then we have an $\mathbb{N}$-filtration given
by taking $V_n$ to be the span of all monomials of the form $x^i
\partial^j$ with $i+j\le n$.  If one then forms the associated graded
algebra, one gets the polynomial ring $k[u,v]$, which is a
domain whose $n$-th homogeneous piece is the span of all elements of
the form $u^iv^j$ with $i+j=n$.  In particular, this theorem
immediately recovers the result that $A_1$ over $k$ is a finite
factorization domain.  More generally, we obtain the following theorem
as corollary,
showing that a broad class of so-called $G$-algebras (see Def. \ref{def: G-algebra}) as well as their subalgebras (see \S 2 for definitions of these algebras and their associated filtrations)
consists of finite factorization domains.
%, which does not require that the base field be algebraically closed 
%\begin{theorem}
%Let $k$ be a field.  Then the following $k$-algebras and their subalgebras are finite factorization domains:
%\begin{enumerate}
%\item the Weyl algebras and the shift algebras;\ref{itm:weylShift}
%\item enveloping algebras of finite-dimensional Lie algebras;\ref{itm:envLie}
%\item coordinate rings of quantum affine spaces;\ref{itm:qntmAffineSpaces}
%\item $q$-shift algebras and $q$-Weyl algebras;\ref{itm:qWeylqShift}
%\item $G$-algebras;\ref{itm:gAlgs}
%\item polynomial rings over the algebras listed in items (\ref{itm:weylShift})-(\ref{itm:gAlgs}).
%\end{enumerate}
%\label{thm: application}
%\end{theorem}
\begin{theorem}
Let $k$ be a field. Then $G$-algebras over $k$ and their subalgebras are finite factorization domains.
In particular, so are
\begin{enumerate}
\item the Weyl algebras and the shift algebras;\label{itm:weylShift}
\item enveloping algebras of finite-dimensional Lie algebras;\label{itm:envLie}
\item coordinate rings of quantum affine spaces;\label{itm:qntmAffineSpaces}
\item $q$-shift algebras and $q$-Weyl algebras;\label{itm:qWeylqShift}
%\item $G$-algebras;\ref{itm:gAlgs}
\end{enumerate}
as well as polynomial rings over the algebras listed in items (\ref{itm:weylShift})--(\ref{itm:qWeylqShift}).
\label{thm: application}
\end{theorem}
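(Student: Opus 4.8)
The plan is to derive everything from Corollary \ref{thm: main2} (and hence from Theorem \ref{thm: main}) by equipping each algebra in question with a finite-dimensional $\NN$-filtration whose associated graded algebra stays a domain after extension of scalars to $\bar k$. The key input, which is established in \S 2, is that a $G$-algebra $A$ on generators $x_1,\dots,x_n$ with relations $x_j x_i = c_{ij} x_i x_j + d_{ij}$ ($i<j$, $c_{ij}\in k^\times$) carries --- by the very conditions Definition \ref{def: G-algebra} imposes on the $d_{ij}$ and on the admissible ordering --- a weight vector $\mathbf w\in\NN_{>0}^n$ for which the spaces $V_m:=\spn_k\{x_1^{a_1}\cdots x_n^{a_n}\colon \sum_i w_i a_i\le m\}$ form an exhaustive $\NN$-filtration with each $V_m$ finite-dimensional and with associated graded the quantum affine space $\grdr_V(A)=k_{\mathbf c}[y_1,\dots,y_n]$, i.e.\ the algebra on $y_1,\dots,y_n$ subject only to $y_j y_i=c_{ij}y_iy_j$. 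Since every $c_{ij}$ is a nonzero scalar, $k_{\mathbf c}[y_1,\dots,y_n]$ is an iterated Ore extension of $k$ by automorphisms, hence a domain, and the same holds for $k_{\mathbf c}[y_1,\dots,y_n]\otimes_k\bar k$; Corollary \ref{thm: main2} then gives that $A$ is a finite factorization domain.

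Next I would pass to subalgebras. If $A'\subseteq A$ is any $k$-subalgebra of a $G$-algebra $A$, restrict the filtration above by setting $V'_m:=V_m\cap A'$. Then each $V'_m$ is finite-dimensional, $\bigcup_m V'_m=A'$ because $\bigcup_m V_m=A$, and $V'_mV'_\ell\subseteq V_{m+\ell}\cap A'=V'_{m+\ell}$, so $\{V'_m\}$ is a finite-dimensional $\NN$-filtration of $A'$. Moreover $V'_m\cap V_{m-1}=V_m\cap A'\cap V_{m-1}=V'_{m-1}$, so the degreewise maps $V'_m/V'_{m-1}\to V_m/V_{m-1}$ are injective and assemble into an embedding of graded $k$-algebras $\grdr_{V'}(A')\hookrightarrow\grdr_V(A)$. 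As $-\otimes_k\bar k$ is exact this stays an embedding after base change, and a subalgebra of a domain is a domain; thus $\grdr_{V'}(A')\otimes_k\bar k$ is a domain and Corollary \ref{thm: main2} applies to $A'$. This subalgebra stability is the only thing needed beyond Theorem \ref{thm: main}, and it is why the statement reads ``$G$-algebras \dots and their subalgebras''.

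It then remains to recognise the algebras in items (\ref{itm:weylShift})--(\ref{itm:qWeylqShift}) as $G$-algebras, which is done in \S 2. For the Weyl algebras $A_n$, the shift algebras, and the enveloping algebra $U(\F{g})$ of a finite-dimensional Lie algebra $\F{g}$, the right-hand sides $d_{ij}$ (the constant $1$, a generator $s_i$, or a bracket $[x_j,x_i]\in\F{g}$ respectively) have total degree $\le 1<2$, so the standard degree filtration makes them $G$-algebras with associated graded a commutative polynomial ring --- equal to $\mathrm{Sym}(\F{g})$ in the enveloping-algebra case, by the PBW theorem; the coordinate ring of a quantum affine space is a $G$-algebra with all $d_{ij}=0$ (and is already $\NN$-graded with finite-dimensional homogeneous components); and the $q$-shift and $q$-Weyl algebras are $G$-algebras whose associated graded is a quantum affine space with parameters among $\{q^{\pm1},1\}\subseteq k^\times$. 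Finally, if $R$ is any of these algebras then $R[t]$ is again a $G$-algebra --- adjoin a central indeterminate $t$ with relations $tx=xt$ --- and is therefore covered by the first paragraph; equivalently, extend the filtration by $\deg t:=1$, so that $\grdr(R[t])=(\grdr R)[t]$ is again a domain with finite-dimensional graded pieces.

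The one place where there is anything to prove rather than to cite is the claim, for a \emph{general} $G$-algebra, that $\grdr_V(A)$ really is the quantum affine space and not some larger quotient --- equivalently, that the standard monomials remain $k$-linearly independent in $A$ and that no relations beyond $y_jy_i=c_{ij}y_iy_j$ survive in the associated graded. This is precisely the content of the nondegeneracy (``diamond lemma'') conditions built into Definition \ref{def: G-algebra}, so the potential main obstacle has in effect already been resolved in \S 2; modulo that, the proof is just the two reductions above together with the example-by-example identification.
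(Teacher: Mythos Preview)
Your proposal is correct and follows essentially the same route as the paper: equip a $G$-algebra with a weight filtration whose associated graded is a quantum affine space, apply Corollary~\ref{thm: main2}, and handle subalgebras by restricting the filtration (your inline argument is exactly the paper's Lemma~\ref{subring}). One small correction: the existence of the weight vector $\mathbf w\in\NN_{>0}^n$ making $\grdr_V(A)$ a quantum affine space is not actually established in \S2 or immediate from Definition~\ref{def: G-algebra} --- the definition only guarantees a monomial ordering $\prec$, not a weight filtration --- and the paper cites the re-filtering theorem of Bueso, G\'omez-Torrecillas and Lobillo \citep[Theorem~2.3]{GomezRefiltering} for precisely this step.
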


In addition, we can give an upper bound on the number of factorizations of elements in our ring $A$.  To explain our result, we must first define \emph{growth functions}.
Given a field $k$ and a $k$-algebra $A$ with an associated filtration $V=\{V_n\colon n\in\NN\}$, we define the growth function of $A$ with respect to $V$ via the rule
 \begin{equation}
 \label{eq: V}
 g_V(n) := \dim_k(V_n).
 \end{equation}
%\end{definition}
With this notion fixed, we can actually give upper bounds on the number of factorizations of elements.
\begin{theorem}
  \label{thm: enumeration}
  Let $k$ be an algebraically closed field and let $A$ be a $k$-algebra with a filtration $V=\{V_n\}$ such that the associated graded algebra of $A$ with respect to $V$ is a domain. Then an element $a \in V_{n}$ has at most $$\frac{n}{4}\cdot 4^{g_V(n)}$$ distinct factorizations into two elements and at most 
  $$2^{n\cdot g_V(n)}$$ total distinct factorizations up to multiplication of factors by central units.
\end{theorem}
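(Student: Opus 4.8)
The plan is to leverage the main result (Theorem~\ref{thm: main}), whose proof presumably shows that every element of $V_n$ factors as a product of irreducibles, each of which lies in some $V_m$ with $m \le n$, and that the ``degree'' (i.e. the smallest $m$ with the factor in $V_m$) is additive along a factorization. The key quantitative input should already be present in the proof of Theorem~\ref{thm: main}: associated to an element $a \in V_n$ one attaches its image $\bar a$ in $B = \mathrm{gr}_V(A)$, and factorizations of $a$ are controlled by factorizations of $\bar a$ in the graded domain $B$, together with a finite amount of ``lifting'' data coming from the difference between $a$ and a lift of $\bar a$. I would extract from that argument the precise bound on how many lifts are possible.

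First I would reduce the count of two-term factorizations $a = bc$ to two pieces of data: the pair of degrees $(i, j)$ with $i + j = n$ (or $i+j \le n$, giving at most $n$ choices, which accounts for the factor $n/4$ after combining with the symmetry $b \leftrightarrow c$ and the fact that the leading forms must multiply to the leading form of $a$), and then, for each fixed degree splitting, the choice of $b$ up to central units. For the latter I would argue that $b$ is determined by a point in (or a subset of) the affine space $V_i \times V_j$ cut out by the equation $bc = a$; since $\mathrm{gr}_V(A)$ is a domain, the leading-term map is ``multiplicative and injective on leading terms,'' so distinct factorizations give distinct pairs of images in $B_i \times B_j$, and a dimension/counting argument over the finite-dimensional spaces $V_i, V_j$ (each of dimension at most $g_V(n)$) yields at most $4^{g_V(n)}$ possibilities — the base $4$ coming from a crude bound of the form $2^{\dim} \cdot 2^{\dim}$ on the number of relevant subspaces or leading monomials. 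Multiplying the $n$ (really $n/4$ after the symmetry reduction) degree choices by $4^{g_V(n)}$ gives the first bound.

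For the second bound, I would iterate: a full factorization $a = a_1 \cdots a_r$ into irreducibles has $r \le n$ (since each irreducible factor contributes at least $1$ to the total degree, and $a \in V_n$), and at each of the at most $n$ stages the "peel off one irreducible factor from the left" step has at most $2^{g_V(n)}$ choices — again a crude bound on the number of ways to choose the leading form of the first factor and then lift, bounded by $2^{g_V(n)}$ because everything happens inside vector spaces of dimension $\le g_V(n)$. Taking the product over the $\le n$ stages gives $\left(2^{g_V(n)}\right)^n = 2^{n \cdot g_V(n)}$ total factorizations.

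The main obstacle, and the step deserving the most care, is making precise the assertion that "a factorization of $a$ is determined, up to central units, by a bounded amount of data living in finite-dimensional vector spaces." This requires: (i) controlling the degrees of the factors — that a factor of an element of $V_n$ cannot itself leave $V_n$, which follows because the associated graded ring is a domain so the leading-term degrees add up; (ii) showing the leading-form map sends a factorization of $a$ to a factorization of $\bar a$ in $B$, and that two factorizations of $a$ with the same sequence of leading forms and the same "correction data" must agree up to central units; and (iii) bounding the number of possible leading-form sequences and correction data by $4^{g_V(n)}$ and $2^{g_V(n)}$ respectively — here I would want the sharpest elementary bound, likely obtained by noting that each factor's leading form is determined by an element of a projective space over a space of dimension $\le g_V(n)$ and that, given the leading form, the factor itself is then pinned down (or nearly so) by solving a triangular system. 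Getting the constants exactly as stated ($\tfrac{n}{4}\cdot 4^{g_V(n)}$ rather than something like $n \cdot 2^{2 g_V(n)+1}$) is a bookkeeping matter of combining the $b \leftrightarrow c$ symmetry with the degree count, and of absorbing small factors into the exponential term.
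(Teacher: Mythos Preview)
Your proposal has a genuine gap: the mechanism you sketch for bounding factorizations with fixed degree splitting does not work, and it misses the actual content of the paper's argument.

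You propose to pass to leading forms in $B=\grdr_V(A)$ and then count. But passing to $B$ only reduces the problem to itself: bounding factorizations of $\bar a$ in the graded domain $B$ is exactly as hard as the original problem, and $B$ has no preferred monomial basis or ``triangular'' structure to exploit. Worse, the assertion that distinct factorizations of $a$ give distinct pairs of leading forms is false in general, and the suggestion that, given a leading form, the full factor is ``pinned down by solving a triangular system'' has no content here: over an infinite field the lower-order terms live in a vector space with infinitely many elements, and nothing in your outline forces finiteness. The phrases ``$2^{\dim}$ subspaces or leading monomials'' do not correspond to any actual count in this setting.

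The paper's route is entirely different and is where all the work lies (Proposition~\ref{lem: vs}). One fixes finite-dimensional subspaces $V,W\subseteq A$ and observes that factorizations $a=bc$ with $b\in V$, $c\in W$ are the $k$-points of an affine variety $X\subseteq\AF_k^{\dim V+\dim W}$ cut out by quadrics (Equation~(\ref{eq: var})). The heart of the argument is to show $X$ is \emph{zero-dimensional}: if $X$ had a positive-dimensional component $Z$, one builds a universal factorization $a\otimes 1=\mathbf{b}\,\mathbf{c}$ inside $A\otimes_k k(Z)$, chooses a discrete valuation on $k(Z)$ adapted to a non-constant coordinate, and clears denominators to produce nonzero zero-divisors in $A\otimes_k K$ for a field extension $K/k$, contradicting Lemma~\ref{lem: val}. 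Only after finiteness is established does the numerical bound enter, and it comes from the affine B\'ezout inequality: a zero-dimensional intersection of quadrics in $\AF_k^{p+q}$ has at most $2^{p+q}$ points. Summing over the $n-1$ degree splittings $n=n_1+n_2$ with $n_1,n_2>0$ and using $g_V(n_i)\le g_V(n)-1$ gives $(n-1)\cdot 4^{g_V(n)-1}\le \tfrac{n}{4}\cdot 4^{g_V(n)}$; the total bound follows by summing over compositions of $n$. None of this is combinatorics on leading terms.
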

These bounds are most likely far from optimal and it would be interesting whether a significant improvement could be found.

Given that many ring theoretic properties can be ``lifted'' from the
associated graded ring, it is tempting to expect that an algebra with
an associated graded ring that is a finite factorization domain should
itself be a finite factorization domain.  When the base field is
algebraically closed, Theorem \ref{thm: main} gives in fact a much
stronger result: one only needs that the associated graded ring is a
domain! On the other hand, if $k$ is not assumed to be algebraically
  closed then we do not know whether there are non-FFDs whose
  associated graded rings are finite factorization domains.

The outline of the paper is as follows.  In \S2 we give an overview of the classes of algebras to which Theorem \ref{thm: main} applies.  In \S3 we prove Theorems \ref{thm: main} and \ref{thm: enumeration} under the common Theorem \ref{thrm:asgrufd}.  In \S4 we prove Corollary \ref{thm: main2} and then give applications of  this corollary and prove Theorem \ref{thm: application} in \S5.  In \S6 we give some examples of non-FFDs.

\subsection{Notation}
Throughout this paper $\NN$ denotes the natural numbers including zero, 
$k$ denotes a field, $\bar{k}$ the algebraic closure of $k$,  $\delta_{i,j}$ the Kronecker delta function, $[a,b]=ab-ba$ the Lie bracket.

\section{Examples and associated filtrations}

Our results are mainly based on the ring-theoretic properties of the
associated graded algebra of certain classes of noncommutative algebras. 
In what follows we will introduce several algebras of
interest and the properties of their associated graded algebras. For a
more thorough introduction to associated graded algebras, we refer the
reader to the book of McConnell and Robson
\citep{mcconnell2001noncommutative}.

  \begin{definition}
Let $k$ be a field and let $n$ be a positive integer. The $n$-\emph{th Weyl algebra}, $A_{n}$, is the unital $k$-algebra with generators
$ x_1, \ldots, x_n ,\partial_1, \ldots, \partial_n$ subject to the relations $[x_i,x_j]=[\partial_i,\partial_j]=0$ and $[\partial_i,x_j]=\delta_{i,j}$ for $i,j\in \{1,\ldots ,n\}$.
%% moved to notations
%, where $\delta_{i,j}$ is the Kronecker delta function.
    As a notational convenience,
  we write
  \[
  {\bf x}^{\bf e} {\bf \d}^{\bf w} := x_1^{e_1}\cdots x_n^{e_n}\partial_1^{w_1} \cdots
  \partial_n^{w_n},
  \]
  where ${\bf e}=(e_1,\ldots ,e_n)$ and ${\bf w}=(w_1,\ldots ,w_n)$ are $n$-tuples of nonnegative integers.  
\end{definition}
A classical result states, that the elements of the form ${\bf x}^{\bf e} {\bf \d}^{\bf w}$ with ${\bf e}, {\bf w} \in \NN^n$
 %being $n$-tuples of nonnegative integers 
  form a $k$-basis for the $A_n$.   We also point out that the $n$-th
  Weyl algebra can be seen as the algebra of linear partial
  differential operators on the affine $n$-space in the case when $k$ is a field of characteristic zero.  
% Viktor: don't need k = bar(k)
    %is an algebraically closed field of characteristic zero.  
    We note that if $\bar{k}$ is the algebraic closure of $k$ then $A_n\otimes_k \bar{k}$ is isomorphic to the $n$-th Weyl algebra over $\bar{k}$.
\begin{example}
\label{ex:WeylFilter}
 {\em We define a \textit{total degree}  $\deg(a)$
% $\deg_{\text{tot}} $ 
of any element $a \in A_n$ as follows: by convention, $\deg(0) = -\infty$. For a monomial ${\bf x}^{\bf e} {\bf \d}^{\bf w}$, with ${\bf e}=(e_1,\ldots ,e_n) \in \NN^n$ and ${\bf w}=(w_1,\ldots ,w_n)\in \NN^n$, the total degree is defined as $\sum_{i =
  1}^n (e_i + w_i)$.  The degree of a general nonzero element $f$ of $A_n$ is then determined by writing $f$ (uniquely) as a linear combination of monomials and defining its total degree to be the maximum of the total degrees of the monomials that appear in the linear combination with a nonzero coefficient.  
  %We define the total degree of $0$ to be $-\infty$.  
We note that the total degree then induces a filtration on $A_n$, namely
$V=\{V_d \colon d \in\NN \}$, where $V_d := \{f \in A_n \mid \deg(f) \leq d\}$.

The associated graded algebra of $A$ with respect to $V$ is easily seen to be 
isomorphic to a commutative multivariate polynomial ring in $2n$ variables.}
\end{example}

% viktor, new
\begin{definition}
Let $A$ be a $k$-algebra, generated by $x_1, \ldots, x_n$ subject to some relations. It is said that $A$ has a Poincar\'e-Birkhoff-Witt (shortly PBW) basis, if $\{ x_1^{a_1}\cdot \ldots \cdot x_n^{a_n}\colon a_i \in \NN \}$ is a $k$-basis of $A$.
\end{definition}

It is obvious, that a commutative ring $k[x_1, \ldots, x_n]$ has a PBW basis. 
On the contrary, the free associative algebra $k\langle x_1, \ldots, x_n \rangle$ does not possess a PBW basis, since its canonical $k$-basis is formed by all elements from the free monoid $\langle x_1, \ldots, x_n \rangle$.

Note, that in general the PBW property should be treated with respect to the
given order of variables $x_1, \ldots, x_n$. 
%{\bf cf. to the paper Shepler-Witherspoon on PBW?}

Many important noncommutative algebras possess PBW bases. It is the case for universal enveloping algebras of finite-dimensional Lie algebras, for certain quantum groups (among other, from Def. \ref{def:quantumAlg}, \ref{def:qWeyl}) and for many algebras, associated to linear operators (partial differentiation/difference/integration as well as their $q$-analogues, see Corollary \ref{cor: OpAlg}) to name a few.  Related to this, we give the notion of a $G$-algebra.

\begin{definition}
\label{def: G-algebra}
For $n\in\NN$ and $1\leq i < j \leq n$ consider the units $c_{ij} \in k^*$ and 
polynomials $d_{ij} \in k[x_1,\ldots,x_n]$. Suppose, that there exists a
monomial total  well-ordering $\prec$ on $k[x_1,\ldots,x_n]$, such that 
for any $1\leq i < j \leq n$ either $d_{ij}=0$ or the leading monomial of $d_{ij}$ is smaller than $x_i x_j$ with respect to $\prec$. The $k$-algebra
$A := k\langle x_1,\ldots, x_n \mid \{ x_j x_i = c_{ij} x_i x_j + d_{ij} \colon  1\leq i < j \leq n \} \rangle$ is called a $G$-\emph{algebra}, if $\{x^{\alpha} : \alpha\in\NN^n \}$ is a $k$-basis of $A$.
\end{definition}
$G$-algebras \citep{AP, LS03, LVDiss} are also known as algebras of solvable type \citep{KW, HLi} and as PBW algebras \citep{GomezRefiltering, Bueso:2003}. See \citep{G14}
for a comprehensive source on these algebras.  
%% VL : reformulation
%We note that according to our definition of PBW algebras, $G$-algebras form a proper subset.  
We note that among algebras possessing a PBW basis,
  $G$-algebras form a proper subset.
For example, if one takes the algebra $A$ generated by $x_1,x_2,x_3$
with relations $x_2x_1=x_1x_2x_3$ and $[x_3,x_1]=[x_3,x_2]=0$ then it
is straightforward to show that $\{x_1^a x_2^b x_3^c \colon
a,b,c\in\NN\}$ forms a $k$-basis for $A$.  On the other hand, $A$
cannot be a $G$-algebra with respect to any ordering because the
Gelfand-Kirillov dimension of $A$ is $4$ and a $G$-algebra on $n$
variables necessarily has Gelfand-Kirillov dimension $n$.  We note
that $G$-algebras are Auslander-regular Noetherian domains
\citep[Theorem 3 and Proposition 4]{GomezAuslander}
and if $A$ and $B$ are $G$-algebras over $k$ then $A \otimes_k B$ is again a $G$-algebra \citep{LVDiss}.

\begin{definition}
\label{def:quantumAlg}
 Let $k$ be a field.  Let us fix a collection of nonzero elements ${\bf q}=(q_{i,j})_{1\le i,j\le n}$ of $k$ satisfying $q_{i,j}q_{j,i}=1$ for $i\neq j$ and $q_{i,i}=1$. 
  A \emph{coordinate ring of the quantum affine} $n$-\emph{space} $\mathcal{O}_{\bf q}(k^n)$ for $n \in \NN$ is a $k$-algebra, generated by $x_1,\ldots, x_n$ subject to relations $x_i x_j = q_{i,j} x_j x_i$ for $1 \leq i,j\leq n$.  These algebras can be viewed as quantizations of the coordinate ring of $k^n$, which is the reason for having quantum affine space in the name.
\end{definition}

\begin{remark}
As one can see from the relations, the algebras $\mathcal{O}_{\bf q}(k^n)$ are naturally graded, like commutative rings $k[x_1,\ldots, x_n]$, by
any tuple $w_1,\ldots,w_n \in \ZZ_{+}$. On the other hand, 
%For the algebras $\mathcal{O}_{\bf q}(k^n)$ one has 
there is a filtration coming from taking $V_{n}$ to be the vector space spanned by the monomials of the form $x_1^{a_1}\cdots x_n^{a_n}$ with $\sum a_j \le n$.  Constructing the associated graded algebra results in a ring structure which is isomorphic to the original ring.  Since the original ring is a domain, we see that the hypotheses given in Theorem \ref{thm: main} are satisfied.
\end{remark}

%Viktor: prefer q-Weyl algebra

\begin{definition}
  \label{def:qWeyl}
Let us consider an $n$-tuple ${\bf q}=(q_1,\ldots ,q_n)\in (k^*)^n$. 
  The \textit{$n$-th $q$-Weyl algebra} $Q_{n}$ for $n \in \NN$ is the
  $k$-algebra with generators $x_1,\ldots ,x_n,
  x_{n+1}:=\partial_1,\ldots ,x_{2n}:=\partial_n$ and relations
  $\partial_ix_j = (1+\delta_{i,j}(q_i-1)) x_j \partial_i +
  \delta_{i,j}$ and $[x_i, x_j]=[\partial_i,\partial_j]=0$ for $i,j\in
  \{1,\ldots ,n\}$.  We note that by setting $q_1=\cdots =q_n=1$ one
  recovers the classical Weyl algebra. Just as with the $n$-th Weyl algebra $A_n$,  we write
  \[
  {\bf x}^{\bf e} {\bf \d}^{\bf w} := x_1^{e_1}\cdots x_n^{e_n}\partial_1^{w_1} \cdots
  \partial_n^{w_n},
  \]
  where ${\bf e}=(e_1,\ldots ,e_n)$ and ${\bf w}=(w_1,\ldots ,w_n)$ are $n$-tuples of nonnegative integers.\end{definition}

\begin{remark}
 $Q_n$ possesses a PBW basis. One can define the filtration and the associated graded algebra of $Q_n$ also with respect to the total degree of the elements in $Q_n$. However, in contrast to $A_n$, the ring $\grdr_V(Q_n)$ will
 not be commutative. As one can easily see, we obtain a coordinate ring of the quantum affine space of dimension $2n$ with parameters $q_{i,j} = 1$ except for $q_{1,n+1}=q_1, \ldots, q_{n,2n}=q_n$.
 %$q_{i,j} = 1+\delta_{i,j}(q_i-1) \in \{1, q_i\}$ 
% $q_{j,k} = 1+\delta_{k,j+n}(q_j - 1) \in \{1, q_j\}$ for $j,k \in \{1 \ldots,2n\}$.}
 \end{remark}

\section{Proof of Theorems \ref{thm: main} and \ref{thm: enumeration}}
In this section, we prove Theorem \ref{thm: main}.  
We make use of several notions from algebraic geometry, 
%For a more detailed description of the concepts we shall use 
%we refer the reader 
%confer to 
cf. the books of Hartshorne \citep{hartshorne1977algebraic} and Cox, Little, and O'Shea \citep{cox1992ideals}.

%\begin{definition}
 % In our context, we call $A$ a \textit{unique factorization domain
   % (abbrv. UFD)},
  %if every element can be written as a product of irreducible
 % elements. This product is unique up to the order of elements and multiplication by units.
%\end{definition}

\begin{definition}
\label{def: FFD}
%{\bf Viktor: I propose to adapt the definition to the situation with $k$-algebras we deal with and include $k$ into the property as follows:}\\
  Let $A$ be a (not necessarily commutative) domain.  We say that $A$ is a
  \textit{finite factorization domain} (FFD, for short), if
  every nonzero, non-unit element of $A$ has at least one
  factorization into irreducible elements and there are at most
  finitely many distinct factorizations into irreducible elements up
  to multiplication of the irreducible factors by central units in $A$.
  \end{definition}
%  {\bf Viktor, a trivial comment: $k$-algebra implies that the elements of $k$ commute with $A$}\\

%{\bf Viktor: original definition:} \\
%  Let $A$ be a domain.  We say that $A$ is a \textit{finite factorization domain} (FFD, for short), if every nonzero, non-unit element
%of $A$ has at least one factorization into irreducible elements and there are at most finitely many distinct factorizations into irreducible elements up to multiplication of the irreducible factors by units.  
%\end{definition}
We remark that some definitions 
%--- mainly those dealing only with commutative domains --- 
found in the literature count factorizations in which the factors are multiplied in different orders as being the same.  We note that removing this does not affect finiteness, since there are only finitely many permutations of factors and hence the definition above is equivalent to the usual definition in the commutative setting.  In most examples that we will look at, we will be working with a domain $A$ that is a finitely generated algebra over some field $k$ and such that the central units in $A$ are precisely the elements of $k^*$.

    Our key result in obtaining finiteness of factorizations is Proposition \ref{lem: vs} below.  In order to prove this, we require a basic result \cite[Lemma 1.1]{RowenSaltman2013}, which we reprove below for the convenience of the reader.

\begin{lemma}
\label{lem: val} Let $k$ be an algebraically closed field and let $A$ be a finitely generated $k$-algebra that is a domain.  If $K$ is a field extension of $k$ then $A\otimes_k K$ is a domain.
\end{lemma}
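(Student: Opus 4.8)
The plan is to reduce the assertion to a statement about products of affine varieties over an algebraically closed field and then to prove that such a product is irreducible. First I would carry out a chain of reductions. If $uv=0$ for nonzero $u,v\in A\otimes_k K$, then $u$ and $v$ together involve only finitely many elements of $K$, so they already lie in $A\otimes_k K_0$ for a subfield $K_0\subseteq K$ finitely generated over $k$; hence one may assume $K$ is finitely generated over $k$. Choosing generators $s_1,\dots,s_m$ of $K/k$ and setting $B:=k[s_1,\dots,s_m]\subseteq K$, we obtain a finitely generated $k$-domain $B$ with $\mathrm{Frac}(B)=K$. Since tensor product commutes with localization, $A\otimes_k K$ is a localization of $A\otimes_k B$ at $1\otimes(B\setminus\{0\})$, and a localization of a domain is a domain; so it suffices to show $A\otimes_k B$ is a domain, with $A$ and $B$ now both finitely generated domains over the algebraically closed field $k$. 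Writing $X=\mathrm{Spec}(A)$ and $Y=\mathrm{Spec}(B)$, this says exactly that the integral affine $k$-varieties $X$ and $Y$ have integral product $X\times_k Y=\mathrm{Spec}(A\otimes_k B)$.

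Reducedness of $A\otimes_k B$ is the easy half and does not use algebraic closedness: $k$ is algebraically closed, hence perfect, so $\mathrm{Frac}(A)/k$ is separable, $\mathrm{Frac}(A)\otimes_k\mathrm{Frac}(B)$ is reduced, and $A\otimes_k B$ embeds into it. For irreducibility I would use the projection $\pi\colon X\times_k Y\to X$, which is surjective and, being flat and of finite type over the Noetherian scheme $X$, is open. Since $k=\bar{k}$, every closed point $x\in X$ has residue field $\kappa(x)=k$, so the fibre $\pi^{-1}(x)$ is (homeomorphic to) $Y$ and in particular irreducible; moreover the closed points are dense in $X$. Now suppose $X\times_k Y=C_1\cup C_2$ with $C_1,C_2$ closed, and put $X_i=\{x\in X:\pi^{-1}(x)\subseteq C_i\}$. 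Each $X_i$ is closed because $X\setminus X_i=\pi\bigl((X\times_k Y)\setminus C_i\bigr)$ is open; and every closed point $x$ lies in $X_1\cup X_2$, since $\pi^{-1}(x)=(\pi^{-1}(x)\cap C_1)\cup(\pi^{-1}(x)\cap C_2)$ together with irreducibility of $\pi^{-1}(x)$ forces $\pi^{-1}(x)\subseteq C_1$ or $\pi^{-1}(x)\subseteq C_2$. Thus $X_1\cup X_2$ is a closed set containing a dense subset, hence $X_1\cup X_2=X$; irreducibility of $X$ then gives $X=X_i$ for some $i$, i.e. $X\times_k Y=C_i$. So $X\times_k Y$ is irreducible, and being reduced it is integral, whence $A\otimes_k B$, and therefore $A\otimes_k K$, is a domain.

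The main obstacle is the irreducibility of the product, and it is the single place where algebraic closedness of $k$ enters: it is what makes the closed-point fibres of $\pi$ all equal to $Y$. The reductions, density of closed points, and openness of flat finite-type morphisms are standard (see e.g.\ \citep{hartshorne1977algebraic}), and reducedness of a tensor product over a perfect field is classical. An alternative, purely field-theoretic route bypassing schemes is to note that $F:=\mathrm{Frac}(A)$ is a \emph{regular} extension of $k$ --- separable since $k$ is perfect, and primary since the algebraically closed field $k$ is its own separable algebraic closure in $F$ --- and then to invoke the standard fact that $F\otimes_k K$ is a domain for every field extension $K/k$ when $F/k$ is regular; since $A\otimes_k K$ embeds in $F\otimes_k K$ by flatness, this again concludes the proof.
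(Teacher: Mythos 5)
Your proof is correct, but it takes a genuinely different route from the paper's. The paper's argument is short and direct: it supposes $xy=0$ with $x,y$ nonzero, writes them with scalar entries $\alpha_i,\beta_j\in K$, passes to the finitely generated $k$-subalgebra $R\subseteq K$ generated by those scalars and by $\alpha_1^{-1},\beta_1^{-1}$, picks a maximal ideal $P\subset R$, and uses the Nullstellensatz ($R/P\cong k$) to specialize down to $A\otimes_k(R/P)\cong A$, producing a zero-divisor in the domain $A$ --- contradiction. Your proof instead reduces to $A\otimes_k B$ with $B$ a finitely generated $k$-domain, interprets this as the product of affine varieties $X\times_k Y$, and separately proves reducedness (via $k$ perfect) and irreducibility (via openness of the flat projection $\pi$, density of closed points, and the fact that closed-point fibres equal $Y$ because $\kappa(x)=k$). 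Both use the Nullstellensatz in some guise, but the paper's specialization argument is more elementary and --- worth noticing --- never actually uses finite generation of $A$ (only of the subring $R\subseteq K$ it constructs), so it proves a stronger statement than what is asserted. Your argument, by contrast, genuinely needs $A$ finitely generated to get density of closed points in $X$ and to ensure closed points have residue field $k$. Your alternative route via regular field extensions ($\mathrm{Frac}(A)/k$ is separable and primary, so $\mathrm{Frac}(A)\otimes_k K$ is a domain) also dispenses with finite generation of $A$ and is closest in spirit to the literature the paper cites, though you state it without proof.
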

\begin{proof}
Suppose not.  Then we can find nonzero $x=\sum_{i=1}^p a_i\otimes \alpha_i$ and $y=\sum_{i=1}^q b_i \otimes \beta_i$ in $A\otimes_k K$ such that $xy=0$.  By picking expressions for $x$ and $y$ with $p$ and $q$ minimal, we may assume that $a_1,\ldots ,a_p$ are linearly independent over $k$ and that $b_1,\ldots ,b_q$ are linearly independent over $k$ and that the $\alpha_i$ and $\beta_j$ are all nonzero.
Let $R$ denote the finitely generated $k$-subalgebra of $K$ generated by $\alpha_1,\ldots ,\alpha_p,\beta_1,\ldots ,\beta_q$ and $\alpha_1^{-1}$ and $\beta_1^{-1}$.  Then we may regard $x$ and $y$ as nonzero elements of $A\otimes_k R$ such that $xy=0$.  Let $P$ be a maximal ideal of $R$.  We have a 
%add vl
$k$-algebra homomorphism $\phi: A\otimes_k R\to A\otimes_k (R/P)$ given by $\sum a_i\otimes r_i \mapsto \sum a_i \otimes \pi(r_i)$, where $\pi: R\to R/P$ is the canonical surjection.  Observe that $\pi(\alpha_1)$ and $\pi(\beta_1)$ are nonzero since $\alpha_1$ and $\beta_1$ are units in $R$ and thus $x':=\phi(x)=\sum a_i \otimes \pi(\alpha_i)$ is nonzero, as $a_1,\ldots ,a_p$ are linearly independent over $k$.  Similarly, $y':=\phi(y)$ is nonzero.  Thus $x'y'=\phi(xy)=\phi(0)=0$ in $A\otimes_k (R/P)$. But $R/P\cong k$ by the Nullstellensatz and hence $A\otimes_k (R/P)\cong A\otimes_k k= A$, which is a domain and so we cannot have $0=x'y'$ with $x'$ and $y'$ nonzero.   The result follows.
\end{proof}
\begin{proposition} Let $k$ be an algebraically closed field, let $A$ be a $k$-algebra that is a domain, and let $V$ and $W$ be finite-dimensional $k$-vector subspaces of $A$.  If $a\in A$ is nonzero then, up to multiplication by elements of $k^*$, there are less than $2^{{\rm dim}(V)+{\rm dim}(W)}$ distinct factorizations of the form $a=bc$ where $b\in V, c\in W$.
\label{lem: vs}
\end{proposition}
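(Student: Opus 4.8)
The plan is to realize the set of factorizations as a fibre of a \emph{finite} morphism between projective varieties, where finiteness is forced by the hypothesis that $A$ is a domain. First I would dispose of the trivial case: if $a$ admits no factorization of the required form there is nothing to prove, so assume $a=b_0c_0$ with $b_0\in V$, $c_0\in W$; in particular $a$ lies in the finite-dimensional subspace $U:=VW$. Put $p=\dim_k V$, $q=\dim_k W$. View multiplication as the \emph{linear} surjection $\mu\colon V\otimes_k W\twoheadrightarrow U$, $v\otimes w\mapsto vw$, and set $K=\ker\mu$. Inside $\mathbb{P}(V\otimes_k W)\cong\mathbb{P}^{pq-1}$ sit two distinguished subvarieties: the Segre variety $\Sigma$, image of $\mathbb{P}(V)\times\mathbb{P}(W)\hookrightarrow\mathbb{P}(V\otimes_k W)$ (the locus of rank-one tensors), and the linear subspace $L:=\mathbb{P}(K)$.

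The crucial point—and the only place the domain hypothesis is used—is that $\Sigma\cap L=\emptyset$. Indeed a point of $\Sigma$ has the form $[v\otimes w]$ with $v\in V$, $w\in W$ nonzero; if it lay on $L$ then $v\otimes w\in K$, i.e. $vw=0$ in $A$, which is impossible since $A$ is a domain. As the centre $L$ of the linear projection $\pi_L\colon\mathbb{P}(V\otimes_k W)\dashrightarrow\mathbb{P}\bigl((V\otimes_k W)/K\bigr)=\mathbb{P}(U)$ is disjoint from $\Sigma$, the restriction $m:=\pi_L|_\Sigma\colon\Sigma\to\mathbb{P}(U)$ is a \emph{finite morphism} (projection of a projective variety away from a disjoint linear subspace is finite). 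Under $\Sigma\cong\mathbb{P}(V)\times\mathbb{P}(W)$ it is simply $m([v],[w])=[vw]$.

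Next I would observe that factorizations $a=bc$ with $b\in V$, $c\in W$, taken up to the $k^{*}$-action $(b,c)\mapsto(\lambda b,\lambda^{-1}c)$, correspond bijectively to points of the fibre $m^{-1}([a])$: from a pair $([v],[w])$ with $[vw]=[a]$ one rescales $v$ so that $vw=a$, and this is well defined modulo $k^{*}$; conversely a factorization gives such a pair, and $b,c\neq0$ automatically. Since $m$ is finite, $m^{-1}([a])$ is a finite set, which already yields the finite factorization property. For the explicit bound, $m^{-1}([a])=\Sigma\cap H$, where $H\supseteq L$ is the linear subspace of $\mathbb{P}(V\otimes_k W)$ of dimension $\dim L+1$ obtained by closing up $\pi_L^{-1}([a])$; as this intersection is finite, Bézout's inequality for the intersection of a projective variety with a linear subspace gives $\bigl|\Sigma\cap H\bigr|\le\deg\Sigma=\binom{p+q-2}{p-1}\le 2^{p+q-2}<2^{p+q}=2^{\dim V+\dim W}$ (the cases $p=0$ or $q=0$ being vacuous).

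I expect the main obstacle to be spotting the right geometric translation: converting the ring-theoretic input ``$A$ is a domain'' into the purely linear statement ``$\ker\mu$ contains no nonzero rank-one tensor,'' hence into the disjointness $\Sigma\cap L=\emptyset$ that makes $\pi_L|_\Sigma$ finite; once this is set up the rest is formal, using only the degree of the Segre variety and Bézout for linear sections. An alternative packaging of the finiteness—presumably the one using Lemma \ref{lem: val}—argues by contradiction: were $m^{-1}([a])$ infinite it would contain a curve $C$, and passing to $F=k(C)$ one would get a genuinely varying factorization $\mathbf{b}\mathbf{c}=a$ over the domain $A\otimes_k F$ (a domain by Lemma \ref{lem: val}, after replacing $A$ by the finitely generated subalgebra generated by $V+W$), contradicting that $\mathbf{b}\otimes\mathbf{c}$ stays outside $\ker(\mu\otimes_k F)$ while being confined to a fibre of the projection. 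Either way, the delicate step is the same reduction of the domain hypothesis to disjointness of the centre of projection.
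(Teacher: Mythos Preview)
Your argument is correct and takes a genuinely different route from the paper's.  The paper argues by contradiction: assuming infinitely many factorizations, it finds a positive-dimensional irreducible component $Z$ of the solution variety, builds a ``generic'' factorization $a\otimes 1=\mathbf{b}\,\mathbf{c}$ inside $A\otimes_k k[Z]$, and then---this is the heart of the paper's proof---constructs a discrete valuation $\nu$ on $k(Z)$ along which one of the factors has a pole; clearing denominators by a uniformizer and reducing modulo the maximal ideal of the valuation ring produces nonzero $b',c'$ with $b'c'=0$ in $A\otimes_k K$ for a field extension $K/k$, contradicting Lemma~\ref{lem: val}.  For the numerical bound the paper applies an affine B\'ezout inequality to each chart $\{x_i=1\}$ separately and sums.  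Your approach bypasses both the valuation machinery and Lemma~\ref{lem: val}: the single observation that the domain hypothesis means $\Sigma\cap L=\emptyset$ (equivalently, that $\ker\mu$ contains no nonzero pure tensor) immediately makes the linear projection $\pi_L|_\Sigma$ a finite morphism, so the fibre over $[a]$ is finite and bounded by $\deg\Sigma=\binom{p+q-2}{p-1}$.  This is conceptually cleaner and yields a sharper intermediate bound than the paper's $2^{p+q}-2^{q}$.  The trade-off is that your argument leans on two facts from projective geometry (finiteness of projection from a disjoint centre, and the degree of the Segre variety), whereas the paper's argument is more self-contained at the cost of the valuation-theoretic detour.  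Note also that the algebraically closed hypothesis enters your proof at the step ``$\Sigma(k)\cap L(k)=\emptyset$ implies $\Sigma\cap L=\emptyset$ as schemes''; over a non-closed field the $k$-points can miss each other while the schemes still meet (exactly what happens in the paper's Remark~\ref{rem: vs2}), so the projection need not be finite---you use closedness in the same essential place the paper does, just more implicitly.
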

\begin{proof}
Suppose that this is not the case.  Let $v_1,\ldots, v_p$ be a basis for $V$, let $w_1,\ldots ,w_q$
be a basis for $W$, and let $u_1,\ldots ,u_t$ be a
basis for the space $V \cdot W$.  Then we have $v_i w_j = \sum_{\ell=1}^t
\gamma_{i,j,\ell} u_{\ell}$ for some $\gamma_{i,j,\ell}\in k$.  If $a\not\in VW$ then the result holds vacuously.  Thus we assume that $a\in VW$ and we write 
$a=\sum \alpha_i u_i$ with
$\alpha_1,\ldots ,\alpha_t\in k$.  Then a factorization $a=bc$ with
$b\in V$ and $c\in W$ just corresponds to a solution
$(x_1,\ldots ,x_p,y_1,\ldots ,y_q)\in k^{p+q}$ to the equation
$$\left( \sum_i x_i v_i\right)\left( \sum_j y_j w_j\right) = \sum \alpha_i u_i.$$
Since we only care about factorizations up to multiplication of the factors by scalars from $k^*$, it suffices to consider factorizations in which
$x_i=1$ for some $i\in \{1,\ldots, p\}$.  

Expanding this product out, we get a system of equations
\begin{equation}
\label{eq: var}
\sum_{i,j} \gamma_{i,j,\ell} x_i y_j = \alpha_{\ell} 
\end{equation}
for $\ell=1,\ldots ,t$.  We now let $X_i$ denote the subvariety
of $\mathbb{A}_{k}^{p+q}$ given by the zero set of the quadratic
equations in Equation (\ref{eq: var}) and the equation $x_i=1$.

By assumption, there are infinitely many distinct factorizations and hence there is some $s\in \{1,\ldots ,p\}$ such that $X_s$ is infinite.  In particular, $X_s$ has some positive-dimensional irreducible component $Z\subseteq \mathbb{A}_{k}^{p+q}$.

Let $\pi_{x_i} :\mathbb{A}_{k}^{p+q}\to \mathbb{A}_{k}^1$
be the projection $(x_1,\ldots ,x_p,y_1,\ldots ,y_q)\mapsto x_i$ for
$i=1,\ldots ,p$.  Similarly, we define the projections $\pi_{y_j}$ for
$j=1,\ldots ,q$.  Then the restriction maps $f_i:=\pi_{x_i}|_Z$ and
$g_j:=\pi_{y_j}|_Z$ are regular maps to $k$ and hence they 
can be viewed as
elements of the ring $k[Z]$ of regular functions of $Z$.  Furthermore, by
construction we have that $f_s$ is the constant function $f_s=1$.  A point $z \in Z$ gives a factorization
\begin{equation} \label{abcz}
a=b_{z}c_{z}
\end{equation} via the
rule
$$b_{z} = \sum f_i(z) v_i\qquad c_{z} = \sum g_j(z)w_j.$$  Furthermore, since the points of $X_s$ give rise to distinct
factorizations of $a$, we see that the tuples $\{(b_{z},c_{z})\colon
z\in Z\}$ are all distinct---even up to multiplication by constants.   We claim that the collection of 
elements $c_z$, with $z\in Z$ are distinct in $\mathbb{P}(W)$ (that is, distinct in $W$ up to multiplication by nonzero elements of $k$).  To see this, suppose that there exist distinct points $z_1$ and $z_2$ of $Z$ such that $c_{z_1}=\lambda c_{z_2}$ for some $\lambda\in k$.  Then we have
$$a=b_{z_1}c_{z_1} = \lambda b_{z_1}c_{z_2} = b_{z_2}c_{z_2}$$ and hence
$(b_{z_2}-\lambda b_{z_1})c_{z_2}=0$.  Since $A$ is a domain and $c_{z_2}$ is a nonzero element of $A$, we see that
$b_{z_2}=\lambda b_{z_1}$ and so the factorizations $a=b_{z_1}c_{z_1}=b_{z_2}c_{z_2}$ are the same up to multiplication by scalars, which is a contradiction. 
In particular, there must be some $h\in \{g_1,\ldots ,g_q\}$ that is non-constant, and so by reindexing if necessary, we may assume that $g_1$ is a non-constant regular map on $Z$.

We now express the identity from Equation (\ref{abcz}) in a larger algebra.  Consider the algebra $B:=A\otimes_k k[Z]$, where the elements of the form $1\otimes \alpha$ with $\alpha\in k[Z]$ are central.
Then inside of $B$ we can find the elements $${\bf b}:=\sum_i v_i\otimes f_i\qquad {\rm and}\qquad {\bf c}=\sum_j w_j\otimes g_j.$$
We claim that $a\otimes 1 = {\bf b}{\bf c}\in B$.  To see this, observe that given a point $z\in Z$, we have a specialization map ${\bf e}_z: B\to A$ given by
$${\bf e}_z\left( \sum_{i=1}^d a_i \otimes h_i\right) = \sum_{i=1}^d h_i(z) a_i,$$ where we note that $h_1(z),\ldots ,h_d(z)\in k$ and so the output is indeed in $A$.  Furthermore, it is straightforward to check that this map is well-defined and that it is a $k$-algebra homomorphism.  Moreover, by construction ${\bf e}_z({\bf b}) = b_z$ and ${\bf e}_z({\bf c})=c_z$ and so ${\bf e}_z({\bf b}{\bf c})=a = {\bf e}_z(a\otimes 1)$ for all $z\in Z$.  In particular,
${\bf e}_z(a\otimes 1- {\bf b}{\bf c})=0$ for every $z\in Z$.
If $a\otimes 1 - {\bf b}{\bf c}$ is nonzero then we may write $a\otimes 1 - {\bf b}{\bf c} = \sum_{i=1}^d a_i \otimes h_i$, for some $d\ge 1$.  By taking $d$ minimal, we may assume that $a_1,\ldots ,a_d\in A$ are linearly independent over $k$ and that $h_1,\ldots ,h_d\in k[Z]$ are linearly independent over $k$.  Then since $h_1,\ldots ,h_d$ are linearly independent regular maps, they are not identically zero.  In particular, there is some point $z\in Z$ such that $h_1(z)\neq 0$.  We now apply the map ${\bf e}_z$ to obtain:
$$0 = {\bf e}_z(a\otimes 1 - {\bf b}{\bf c}) = \sum_{i=1}^d h_i(z) a_i,$$ which gives a contradiction since $a_1,\ldots ,a_d$ are linearly independent over $k$ and $h_1(z),\ldots ,h_d(z)\in k$ with $h_1(z)$ nonzero.  Thus we see that we have the desired factorization
$$a\otimes 1 = {\bf b}{\bf c}$$ in the algebra $B$.  We note that $B$ is a subalgebra of the algebra $C=A\otimes_k k(Z)$ and we may regard the factorization $a\otimes 1 = {\bf b}{\bf c}$ as a factorization in $C$.

Now since $g_1\in k[Z]$ is non-constant and $k$ is algebraically closed, we have that $1/g_1$ is transcendental over $k$.  Since $k(Z)$ is finitely generated over $k$ it has some finite transcendence degree $\kappa$.  We may then extend $1/g_1$ to a transcendence base
$h_1=1/g_1,h_2,\ldots ,h_{\kappa}$ for $k(Z)$ over $k$; that is, $h_1,\ldots ,h_{\kappa}$ are algebraically independent over $k$ and $k(Z)$ is an algebraic extension of the subfield $F:=k(h_1,\ldots ,h_{\kappa})$.  Moreover, since $k(Z)$ is finitely generated over $k$ we have that $k(Z)$ is a finite extension of $F$.  

We let $R$ denote the $k$-subalgebra of $k(Z)$ generated by $h_1,\ldots ,h_{\kappa}$ and we let $S$ denote the integral closure of $R$ in $k(Z)$. 
We claim that the field of fractions of $S$ is equal to $k(Z)$.  To see this, we observe that since $k(Z)$ is algebraic over $F$ and since $F$ is a field we have that $k(Z)$ is equal to the integral closure of $F$ in $k(Z)$.  Now $S$ is the integral closure of $R$ in $k(Z)$ and so if we take $U$ to be the multiplicatively closed subset $R\setminus \{0\}$ then we have that $U^{-1}S$ is equal to the integral closure of $U^{-1}R$ in $U^{-1} k(Z)=k(Z)$ \cite[Proposition 4.13]{eisenbud1995commutative}.  But $U^{-1}R=F$ and since the integral closure of $F$ in $k(Z)$ is equal to $k(Z)$, we see that $U^{-1}S=k(Z)$.  Since $k(Z)=U^{-1}S\subseteq {\rm Frac}(S)\subseteq k(Z)$, we obtain the claim.

 Next, observe that $h_1$ generates a height one prime ideal of $R$ and so by going up (see \cite[Proposition 4.15]{eisenbud1995commutative}), there is a height one prime ideal $P$ of $S$ such that $P\cap R=R h_1$.  Since $S$ is integrally closed, we have that $S_P$, the localization of $S$ at the complement of the prime ideal $P$, is a discrete valuation ring with valuation $\nu_0: S_P\setminus \{0\}\to \mathbb{Z}$ given by $\nu_0(s)=m$ where $m$ is the unique nonnegative integer such that $s\in P_P^m$ but $s\not\in P_P^{m+1}$ (cf. \cite[Theorem 11.2]{eisenbud1995commutative}).  This valuation extends to a valuation $\nu: {\rm Frac}(S_P)\setminus \{0\}=k(Z)^* \to \mathbb{Z}$ via the rule $\nu(a/b)=\nu_0(a)-\nu_0(b)$ for $a,b\in S_P\setminus \{0\}$.  By Nakayama's lemma (see \cite[Corollary 4.8]{eisenbud1995commutative}) there exists some $\theta\in PS_P\setminus (PS_P)^2$ and thus $\nu(\theta)=1$.  
 
Now we let $$m_1=\max \{-\nu(f_1),\ldots
,-\nu(f_p)\}$$ and let $$m_2=\max \{-\nu(g_1),\ldots, -\nu(g_q)\}.$$
Then $m_2\ge - \nu(g_1)>0$ since $h_1=1/g_1\in PS_P$ and so $-\nu(g_1)=\nu_0(h_1)>0$.  
%$m_1$ is less than or equal to zero, 
Also $m_1 \ge 0$, since $f_s$ is a nonzero constant.  Then multiplying both sides of the equation
$a\otimes 1 = {\bf b}{\bf c}\in C$ by $1\otimes \theta^{m_1+m_2}$, which is central in $C$, we see that
$$a\otimes \theta^{m_1+m_2} = \left(\sum_i v_i \otimes (f_i \theta^{m_1})\right)\left( \sum_j w_j \otimes (g_j \theta^{m_2})\right).$$
Let $T$ denote the subring of $k(Z)$ consisting of all rational functions $h$ for which $\nu(h)\ge 0$.  Then $T$ is a local ring with maximal ideal $Q$ consisting of all elements with $\nu(h)>0$.  Then by construction
 $\theta^{m_1+m_2},f_1\theta^{m_1},\ldots ,f_p\theta^{m_1},g_1\theta^{m_2},\ldots ,g_q\theta^{m_2}\in T$, and so the factorization 
$$a\otimes \theta^{m_1+m_2} = \left(\sum_i v_i \otimes (f_i \theta^{m_1})\right)\left( \sum_j w_j \otimes (g_j \theta^{m_2})\right)$$ holds in the subalgebra $A\otimes_k T$ of $C$.  
Now let $K=T/Q$ and let $\pi:T\to K$ be the canonical homomorphism.  Then $K$ is a field extension of $k$ and we have a $k$-algebra homomorphism 
$\phi : A\otimes_k T\to A\otimes_k K$ given by
$$\sum a_i \otimes t_i \mapsto a_i \otimes \pi(t_i).$$
Since $m_1+m_2>0$ we see that $\theta^{m_1+m_2}\in Q$ and so 
$\phi(a\otimes \theta^{m_1+m_2})=0$.  On the other hand,
$$b':=\phi\left(\sum_i v_i \otimes (f_i \theta^{m_1})\right)$$ is nonzero since the $v_i$ are linearly independent over $k$ and by construction there is some $\ell$ such that $\pi(f_{\ell}\theta^{m_1})\in K$ is nonzero.  Similarly, 
$$c':=\phi\left( \sum_j w_j \otimes (g_j \theta^{m_2})\right)$$ is nonzero.  Thus we have $0=b'c'$ in $A\otimes_k K$ with $b'$ and $c'$ nonzero.  But this is impossible by Lemma \ref{lem: val} since $A$ is a domain.

It only remains to estimate the size of the union of the $X_i$.  From
Equation (\ref{eq: var}) it follows that each $X_i$ is the zero set of $t$
quadratic polynomials along with the linear polynomial $x_i=1$ in
$\mathbb{A}_k^{p+1-i+q}$.  By using the linear relation $x_i=1$ to
reduce the dimension of the ambient affine space, we may regard each
$X_i$ as being the zero set of $t$ quadratic polynomials in
$\mathbb{A}_k^{p-i+q}$.  Thus by a general B\'ezout inequality
(e.~g. \citep[Theorem 3.1]{schmid1995affine}) we see that since
each $X_i$ is finite, it must have size at most $2^{p-i+q}$.  Since
the union of the $X_i$ as $i$ ranges from $1$ to $p$ gives the total
number of distinct factorizations, we see that the total number is at
most $$\sum_{i=1}^p 2^{p+q-i} = 2^{p+q} - 2^q < 2^{p+q}.$$ The result
now follows by observing that $p={\rm dim}(V)$ and $q={\rm dim}(W)$.

\end{proof}

A few remarks are in order.
%% Viktor, new
\begin{remark}
Suppose that $k$ is a finite field. Then Proposition \ref{lem: vs} holds trivially
for $k$. 
%a finitely generated $k$-algebra that is a domain is an FFD. 
Indeed, if $a = bc = (\sum b_i v_i) \cdot (\sum c_j w_j)$ 
for $b_i, c_j \in k$ is a factorization, there are only finitely many choices for 
$b_i, c_j$, what gives a finite upper bound of $|k|^{p+q}$ possibilities to choose $(b,c)$.
\end{remark}
\begin{remark} 
\label{rem: vs2} We note that in general one cannot eliminate the algebraically closed hypothesis in the statement of Proposition \ref{lem: vs}.  
%For example, if $k=\mathbb{R}$ and $A=V=W=\mathbb{C}$ then the element $1$ has distinct factorizations $(1+ib)(c-icb)$, $c$ and $b$ are real numbers with $c+cb^2=1$. 
For example, if $k=\mathbb{R}$ and $A=\mathbb{R} + \CC[t]\cdot t\subseteq \mathbb{C}[t]$ and if we let $V=W=\mathbb{R}+\mathbb{C}t$,
then we have infinitely many factorizations of $t^2$ in $VW$ of the form
\[
t^2 = (\cos(\theta) + i\sin(\theta))t \cdot (\cos(\theta) -
i\sin(\theta))t
\]
for any  $\theta \in [0,2\pi)$. Notice that the units of $A$ are precisely the nonzero real numbers and hence for $\theta\in [0,\pi)$ these factorizations are distinct up to multiplication by central units.   
%% the element $1$, which is a unit, has distinct factorizations $(1+ib)(1-ib)/(1+b^2)$ for a real number $b$. 
%%  It is perhaps instructive to examine the reason the conclusion does not hold in this case with an algebraically closed field.  We have factorizations parametrized by a rational curve as follows $(1+it)(1/(1+t^2) - ti/(1+t^2))=1$.  At this point in the proof, we observe that 
%% $1/(1+t^2) - ti/(1+t^2)$ is regular at all points in $\mathbb{C}\cup \{\infty\}$ except for $t=\pm i$.  Notice that these points do not lie in $\mathbb{R}$ and so we cannot remove poles and specialize and obtain a ``valid'' factorization of zero.  
\end{remark}
%\begin{remark}
%placeholder (for the list of nice algebras)
%\end{remark}

We now prove Theorems \ref{thm: main} and \ref{thm: enumeration}.  In
fact, we give a slightly different statement --- easily seen to be
equivalent --- which only deals with factorizations up to nonzero scalars.  In fact, these are exactly the units of the algebra in our context, as one can observe by looking at the associated graded algebra.
\begin{theorem}
  \label{thrm:asgrufd}
  %% Viktor: don't need $d$
%Let $d$ be a natural number, 
Let $k$ be an algebraically closed field, and let $A$ be a $k$-algebra 
%% Viktor: don't require; fin.dim of V,W is enough
%{(\bf Viktor: I believe we have to require that $A$ is fin.gen, see below.)}
with a finite-dimensional filtration $V=\{V_n\}_{n\ge 0}$ such that the associated graded algebra is a (not necessarily commutative) domain.  
%{(\bf Viktor: I believe we have to require that $gr_V(A)$ is an FFD as well. Otherwise take $A$ graded non-FFD)}
%% Viktor: reformulate using our language
%Then a nonzero element of $A$ has at most finitely many distinct factorizations.  
Then $A$ is an FFD. 
Moreover, if $a\in V_n$ then $a$ has at most $\frac{n}{4}\cdot 4^{g_V(n)}$ distinct factorizations into two irreducible elements and at most $2^{n\cdot g_V(n)}$ total factorizations, where we only consider factorizations up to multiplication by central units.  
\end{theorem}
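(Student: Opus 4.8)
The plan is to deduce the theorem from Proposition \ref{lem: vs}, the classical fact that an algebra whose associated graded ring is a domain is itself a domain, and a careful look at the order function attached to the filtration. To begin, since $B=\mathrm{gr}_V(A)$ is a domain, $A$ is a domain by \citep[Proposition 1.6.6]{mcconnell2001noncommutative}. For $0\neq a\in A$ let $\mathrm{ord}(a)$ denote the least $n$ with $a\in V_n$; because the principal symbol of a product is the product of the principal symbols, which is nonzero in the domain $B$, the function $\mathrm{ord}$ is additive: $\mathrm{ord}(ab)=\mathrm{ord}(a)+\mathrm{ord}(b)$. In particular $\mathrm{ord}(1)=0$, so $1\in V_0$, and every unit has order $0$; thus the units of $A$ lie in $V_0$, which is a finite-dimensional subalgebra of $A$ that is a domain, hence a finite-dimensional division $k$-algebra, hence equal to $k$ since $k$ is algebraically closed. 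So the central units of $A$ are precisely $k^{*}$, and ``up to central units'' means ``up to $k^{*}$'', which is exactly the hypothesis under which Proposition \ref{lem: vs} was established. Finally, replacing $n$ by $\mathrm{ord}(a)\le n$ if needed (the asserted bounds only decrease, $g_V$ being nondecreasing), I may assume $\mathrm{ord}(a)=n$, so that $V_{n-1}\subsetneq V_n$ and $\dim V_i,\dim V_{n-i}\le g_V(n)-1$ for all $1\le i\le n-1$.

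Next I would dispatch existence and the two-factor bound. Existence of a factorization into irreducibles is immediate by descent on $\mathrm{ord}$: an element of order $1$ is irreducible (a proper factorization would require two factors of order $\ge 1$), and a reducible element of order $n\ge 2$ is a product of two non-units, each of order in $\{1,\dots,n-1\}$, so induction applies. A factorization $a=bc$ into two irreducibles has $\mathrm{ord}(b)=i$ and $\mathrm{ord}(c)=n-i$ for some $1\le i\le n-1$, hence $b\in V_i$ and $c\in V_{n-i}$; Proposition \ref{lem: vs} applied to the pair $(V_i,V_{n-i})$ produces fewer than $2^{\dim V_i+\dim V_{n-i}}\le 2^{2g_V(n)-2}=\tfrac14\,4^{g_V(n)}$ such factorizations up to $k^{*}$, and summing over the $n-1$ possible values of $i$ gives the bound $\tfrac n4\,4^{g_V(n)}$.

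The total bound is the heart of the argument. The key observation is that a factorization $a=a_1\cdots a_r$ into irreducibles, considered up to $k^{*}$-scaling of the factors, carries the same information as the chain of left partial products $1=b_0\mid b_1\mid\cdots\mid b_r=a$, where $b_j=a_1\cdots a_j$, considered up to $k^{*}$-scaling of each $b_j$: since $A$ is a domain, the chain determines $a_j=b_{j-1}^{-1}b_j$, and conversely. Additivity of $\mathrm{ord}$ forces $0=\mathrm{ord}(b_0)<\cdots<\mathrm{ord}(b_r)=n$, so $r\le n$ and the chain meets each level $i\in\{1,\dots,n-1\}$ at most once. Writing $D_i$ for the set of classes in $\mathbb{P}(A)$ of left divisors of $a$ of order exactly $i$, the factorization is recovered from the partial function $i\mapsto[b_j]$ taking values in $D_i$ (or undefined), so the number of factorizations is at most $\prod_{i=1}^{n-1}(|D_i|+1)$. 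When $D_i\neq\emptyset$, choosing a left divisor $b\in V_i$ of $a$ with cofactor $c=b^{-1}a\in V_{n-i}$ of order exactly $n-i$, Proposition \ref{lem: vs} yields $|D_i|<2^{\dim V_i+\dim V_{n-i}}$, while right multiplication by $c$ maps $V_i$ into $V_n$ with the composite $V_i\to V_n\to V_n/V_{n-i}$ having kernel exactly $V_0$, hence an injection $V_i/V_0\hookrightarrow V_n/V_{n-i}$ and the estimate $\dim V_i+\dim V_{n-i}\le g_V(n)+1$. Since the filtration strictly increases at most $\dim V_n-\dim V_0=g_V(n)-1$ times, at most $\ell:=\min(n-1,g_V(n)-1)$ of the $D_i$ are nonempty, so the number of factorizations is at most $2^{\ell(g_V(n)+1)}\le 2^{(n-1)g_V(n)+(g_V(n)-1)}<2^{n\,g_V(n)}$, as desired.

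I expect the main obstacle to be the bookkeeping of the last paragraph: making the correspondence between factorizations up to scalars and chains of left divisors up to scalars genuinely precise, keeping track of how the scaling ambiguity propagates along the chain; and checking that the two estimates $\dim V_i+\dim V_{n-i}\le g_V(n)+1$ and ``at most $g_V(n)-1$ of the $D_i$ are nonempty'' combine to give exactly $2^{n\,g_V(n)}$ rather than a somewhat weaker exponent. Everything else is routine, modulo the input of Proposition \ref{lem: vs}.
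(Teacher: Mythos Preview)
Your proof is correct. The preliminary material (identifying $V_0=k$, additivity of the order function, units being $k^*$, existence of factorizations by descent) and the two-factor bound follow essentially the same path as the paper.

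For the total bound you take a different route. The paper proceeds by recursion over ordered compositions $n=n_1+\cdots+n_j$: it bounds the number of factorizations with prescribed orders by $2^{\sum_i g_V(n_i)}\le 2^{n(g_V(n)-1)}$ and then multiplies by the number $2^n$ of compositions to obtain $2^{n\,g_V(n)}$. You instead encode a factorization by its chain of left partial products and bound the number of such chains by $\prod_{i=1}^{n-1}(|D_i|+1)$; the two extra ingredients you bring in---the injection $V_i/V_0\hookrightarrow V_n/V_{n-i}$ giving $\dim V_i+\dim V_{n-i}\le g_V(n)+1$, and the pigeonhole observation that at most $g_V(n)-1$ of the $D_i$ can be nonempty---are not used in the paper. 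Your assembly is more structural (it exploits the domain property a second time, through injectivity of right multiplication by a cofactor), while the paper's is purely combinatorial; both land on the same exponent $n\,g_V(n)$, and your approach would in fact give a slightly sharper bound if one tracked constants.
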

Before we prove this, we remark that graded non-FFDs over algebraically closed fields do exist (see Example \ref{ex:EasyNonFFD}).  However, in all such instances we necessarily have some infinite-dimensional homogeneous component.

\begin{proof} 
We remark that $V_0$ must itself be a $k$-algebra since $V_0V_0\subseteq V_0$.  Moreover, it is a domain by hypothesis and finite-dimensional over $k$.  Since $k$ is algebraically closed, we then see that $V_0=k$.
We now show that every nonzero non-unit element of $A$ has at least one factorization into irreducibles.  To do this, suppose that there exists some element $a\in A$ that is nonzero and not a unit that does not factor into irreducibles.  Then among all such $a$ we pick one such that $a\in V_n$ with $n$ minimal.  Then $a$ is not irreducible or it would already have a factorization and hence $a=bc$ for some $b\in A$ and $c\in A$ that are non-units.  Since the associated graded ring is a domain, we see that $b\in V_i$ and $c\in V_{n-i}$ for some $i\le n$.  In fact, since $V_0=k$ and since $b$ and $c$ are non-units we have that both $i$ and $n-i$ are less than $n$, and so by minimality of $n$ we see that $b$ and $c$ have a factorization into irreducibles, which in turn gives a factorization of $A$ into irreducibles.  

We also note that the units of $A$ are precisely the elements of $k^*=V_0\setminus \{0\}$.  The reason for this is that if $u\in V_i\setminus V_{i-1}$ for some $i>0$ then for nonzero $v\in A$, the element $uv$ must be in $V_j$ for some $j\ge i$ since the associated graded ring is a domain.  Since $1\in V_0$, we see that the only units of $A$ are in $V_0$ and this gives the claim.

We finally show that $A$ is an FFD.  We have shown that it is sufficient to count factorizations up to multiplication of factors by scalars from $k^*$.
Suppose, towards a contradiction, that this is not the case and  let $n$ be the smallest element of $\mathbb{N}$ for which there is some $a\in V_{n}$ with infinitely many distinct factorizations.  We note that by minimality of $n$ it follows that $a$ has infinitely many distinct factorizations of the form $a=bc$.  We note that $V_{0}$ must be itself a domain that is finite-dimensional over $k$.  Since $k$ is algebraically closed we thus see that $V_{0}$ is equal to $k$.  In particular any proper factorization of $a$ of the form $a=bc$ must have $b\in V_{n_1}$ and $c\in V_{n_2}$ with $n_1+n_2=n$ and $n_1,n_2>0$.  We may assume that $A$ is infinite-dimensional over $k$; if it is not, there is nothing to prove since we
 then have $A=k$, because $A$ is a domain and $k=\bar{k}$.

By Proposition \ref{lem: vs} we have that the number of distinct factorizations of $a$ of the form $a=bc$ with $b\in V_{n_1}$ and $c\in V_{n_2}$ is at most
$2^{g_V(n_1)+g_V(n_2)}$.  Thus the total number of distinct factorizations of $a$ into two elements is at most 
$$\sum_{n_1+n_2=n} 2^{g_V(n_1)+g_V(n_2)}.$$
There are at most $n-1$ solutions to the equation $n_1+n_2=n$.  Moreover, $g_V(n_1)$ and $g_V(n_2)$ are both at most $g_V(n)-1$ since $n_1,n_2<n$ and $A$ is infinite-dimensional and so we see that the number of different factorizations of $a$ up to multiplication by scalars is at most $(n-1) \cdot 4^{g_V(n)-1}$ when $a\in V_{n}$.  
In particular, a simple recursion now gives that the total number of factorizations (with any number of factors) up to multiplication by scalars is at most
$$\sum_{j\ge 2} \sum_{n_1+\cdots +n_j=n} 2^{\sum_j g_V(n_j)}.$$
Since each $n_i$ appearing in the sum is nonzero, we see that this sum can in fact be expressed as
$$\sum_{j=2}^{n} \sum_{n_1+\cdots +n_j=n} 2^{\sum_j g_V(n_j)}.$$
As before, for $j\le n$ and $n_1+\cdots +n_j=n$, we have
$2^{\sum_j g_V(n_j)}\le 2^{n\cdot (g_V(n)-1)}$.  Since the number of compositions of $n$ is $2^{n}$ we obtain the upper bound
$$2^n \cdot 2^{n\cdot (g_V(n)-1)}=2^{n \cdot g_V(n)}$$ on the total number of factorizations.
\end{proof}
\begin{remark} We note that if one follows the proof, then it is easy
  to see that the FFD-part of Theorem \ref{thrm:asgrufd} holds for
  algebras whose associated graded algebras are domains when we form
  the associated graded algebra using a finite-dimensional filtration
  by an ordered monoid $(\Gamma, +)$ having the properties that \emph{any element $\gamma$ in the monoid has at most finitely many decompositions $\gamma=\gamma_1+\gamma_2$ in $\Gamma$} and that the identity element of $\Gamma$ is the unique minimal element of $\Gamma$.  We briefly give some background to explain this remark.  

We recall that a monoid $(\Gamma, +)$ is \emph{ordered} if it is endowed with a total ordering $\prec$ such that whenever $\gamma_1\prec \gamma_2$, we have $\gamma + \gamma_1\prec \gamma + \gamma_2$ and $\gamma_1 + \gamma \prec \gamma_2 + \gamma$ for all $\gamma\in \Gamma$.  

We then say that $A$ has a \emph{finite-dimensional} $\Gamma$-filtration (resp. grading) if $A$ is a union (resp. a direct sum) of finite-dimensional $k$-vector subspaces $V=\{V_{\gamma}\colon \gamma\in \Gamma\}$ such that for all $\gamma_1,\gamma_2\in \Gamma$ we have:
\begin{enumerate}
\item[(i)] $V_{\gamma_1}\subseteq V_{\gamma_2}$ whenever $\gamma_1\preceq \gamma_2$;
\item[(ii)] $V_{\gamma_1}V_{\gamma_2}\subseteq V_{\gamma_1 + \gamma_2}$.
\end{enumerate}
As before, we can form an \emph{associated graded algebra},
  denoted by ${\rm gr}^{\Gamma}_V(A)$, which is given by
$${\rm gr}^{\Gamma}_V(A) = \bigoplus_{\gamma} V_{\gamma}/V_{\prec
  \gamma},$$ where $V_{\prec \gamma}$ is $(0)$ if $\gamma$ is the
identity element in $\Gamma$ or, otherwise, $V_{\prec \gamma}$ is the
union of all $V_{\lambda}$ with $\lambda$ strictly less than $\gamma$.
One often finds it useful to form filtrations resp. gradings using the monoid
$\Gamma=\mathbb{N}^d$ with $\prec$ given by the ``weighted degree
lexicographic'' order. Namely, let the strictly positive weights $w_i$
be assigned to the components $1,\ldots,d$. Then $(e_1,\ldots
,e_d)\preceq (e_1',\ldots ,e_d')$ if and only if either $\sum w_i e_i
< \sum w_i e_i'$ or $\sum w_i e_i = \sum w_i e_i'$ and if the tuples
are not equal and $j$ is the first index $i$ for which $e_i\neq e_i'$
then $e_j < e_j'$.
%(One can also do a
%similar construction with a weighted degree lexicographic order as
%long as all weights are positive.)  
An $\mathbb{N}^d$-filtration can be made into an $\mathbb{N}$-filtration by defining $V_n := \sum\{ V_{(n_1,\ldots, n_d)} : n_i \in \NN, w_1 n_1+\cdots +w_d n_d\le n \}$. The associated graded algebra will have
the same desired properties as the one with the filtration with
respect to $\mathbb{N}^d$. Hence, as far as Theorem \ref{thrm:asgrufd} is concerned, for proving that certain algebras are FFDs one can take either an $\mathbb{N}$-filtration or a finer $\mathbb{N}^d$-filtration, which is often more convenient.
%however, this is not a more general framework (although it is often more convenient) 
%{\bf Viktor is still thinking on  reformulating the latter couple of sentences}.

\label{rem: monoid}
\end{remark}
% We note that Theorem \ref{thrm:asgrufd} applies to some \emph{graded
%   algebras}.  We recall that an algebra $B$ is
% $\mathbb{N}$-\emph{graded} if it has a
% decomposition $$B=\bigoplus_{n\ge 0} B_n$$ such that each $B_i$ is a
% $k$-vector space and $B_i B_j\subseteq B_{i+j}$.
\begin{corollary}
\label{graded}
Let $k$ be an algebraically closed field and let $$B=\bigoplus_{n\ge 0} B_n$$ be an $\mathbb{N}$-graded $k$-algebra that is a domain with the property that $B_n$ is finite-dimensional for every $n$.  Then $B$ is an FFD.
\end{corollary}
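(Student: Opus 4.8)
The plan is to deduce this directly from Theorem \ref{thrm:asgrufd} by equipping $B$ with its canonical degree filtration. I would set $V_n := B_0 \oplus B_1 \oplus \cdots \oplus B_n$ for $n \in \NN$ (with $V_{-1} = 0$). First I would verify that $\{V_n \colon n \in \NN\}$ is an $\NN$-filtration of $B$: one has $V_n \subseteq V_m$ whenever $n \le m$ by construction; since the grading satisfies $B_i B_j \subseteq B_{i+j}$, bilinearity gives $V_n V_m \subseteq V_{n+m}$; and $B = \bigoplus_{n\ge 0} B_n = \bigcup_{n\ge 0} V_n$. Moreover $V_n$ is a finite sum of the finite-dimensional spaces $B_0, \ldots, B_n$, hence is itself finite-dimensional, so this is a finite-dimensional filtration in the sense required by Theorem \ref{thrm:asgrufd}.

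Next I would identify the associated graded algebra ${\rm gr}_V(B) = \bigoplus_{n\ge 0} V_n/V_{n-1}$. Because $V_n = B_0 \oplus \cdots \oplus B_n$ and $V_{n-1} = B_0 \oplus \cdots \oplus B_{n-1}$, the quotient $V_n/V_{n-1}$ is canonically isomorphic as a $k$-vector space to $B_n$. Sending a homogeneous element $b \in B_n$ to its class in $V_n/V_{n-1}$ and extending linearly then yields a $k$-linear bijection $B \to {\rm gr}_V(B)$, and one checks that it respects products: for homogeneous $a \in B_n$, $b \in B_m$, the product of their classes in ${\rm gr}_V(B)$ is by definition the class of $ab$ in $V_{n+m}/V_{n+m-1}$, which is precisely the image of $ab \in B_{n+m}$. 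Hence ${\rm gr}_V(B) \cong B$ as graded $k$-algebras, and in particular ${\rm gr}_V(B)$ is a (not necessarily commutative) domain.

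With these two observations, Theorem \ref{thrm:asgrufd} applies directly: $k$ is algebraically closed, $B$ is a $k$-algebra with a finite-dimensional filtration $V$ whose associated graded algebra is a domain, so $B$ is an FFD. (As a consistency check, Theorem \ref{thrm:asgrufd} forces $V_0 = k$, and indeed $V_0 = B_0$ is a finite-dimensional domain over an algebraically closed field, hence equals $k$; one can alternatively invoke Theorem \ref{thm: main} in place of Theorem \ref{thrm:asgrufd}.) If desired, one can also record the quantitative consequence: an element of $B_0 \oplus \cdots \oplus B_n$ has at most $\tfrac{n}{4}\cdot 4^{g_V(n)}$ factorizations into two irreducibles and at most $2^{n\cdot g_V(n)}$ factorizations in total, where $g_V(n) = \dim_k(B_0 \oplus \cdots \oplus B_n)$. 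I do not expect any genuine obstacle: the only point needing more than one line of justification is the identification ${\rm gr}_V(B) \cong B$, and the rest is a routine check of the hypotheses of Theorem \ref{thrm:asgrufd}.
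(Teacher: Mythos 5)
Your proof is correct and takes essentially the same approach as the paper: equip $B$ with the filtration $V_n = B_0 \oplus \cdots \oplus B_n$, observe that the associated graded algebra is isomorphic to $B$ itself (hence a domain), and invoke Theorem \ref{thm: main} (equivalently, Theorem \ref{thrm:asgrufd}). You supply more of the routine verification than the paper does, but there is no substantive difference.
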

\begin{proof}
We note that $B$ has a natural filtration $V=\{V_n\colon n\in \NN\}$ coming from the grading, where $V_n=\oplus_{i\le n} B_i$.  The associated graded algebra is then isomorphic to $B$ and so the result follows from Theorem \ref{thm: main}.
%% Coordinate rings of quantum affine spaces are $\mathbb{N}$-graded domains and thus the result follows from Theorem \ref{thrm:asgrufd}.
\end{proof}

For counterexamples in the case that $k\neq \bar{k}$, see Example \ref{ex:EasyNonFFD} and Remark \ref{rem: vs2}.

\section{Proof of Corollary \ref{thm: main2}}
In this section, we give the proof of Corollary \ref{thm: main2}.  This is our main criterion, which we will use to give a broad class of examples of FFDs over non-algebraically closed fields.

\begin{proof}[Proof of Corollary \ref{thm: main2}]
Let $B$ denote the associated graded algebra of $A$ with respect to the filtration $\{V_n\}$.  Then $\{V_n\otimes_k \bar{k}\}$ is a filtration of $A\otimes_k \bar{k}$ and the associated graded algebra is naturally isomorphic to $B\otimes_k \bar{k}$. Then $B\otimes_k \bar{k}$ is a graded $\bar{k}$-algebra, and by hypothesis $B\otimes_k \bar{k}$ is a domain.  It follows that $A\otimes_k \bar{k}$ is an FFD.  We now claim that $A$ is an FFD.  

We note that $A$ embeds in $A\otimes_k \bar{k}$ via the homomorphism $a\mapsto a\otimes 1$ and hence $A$ is a domain as it is a subring of an FFD.  We now show that $A$ is an FFD.  Suppose that some nonzero non-unit element of $a\in A$ has infinitely many distinct factorizations.   Then $a$ has infinitely many distinct factorizations (up to multiplication by central units) into two elements.  For each such factorization $a=pq$, we have that $a\otimes 1 = (p\otimes 1)(q\otimes 1)$ gives a factorization of $a\otimes 1$ in $A\otimes_k \bar{k}$.  We note that the units of $A\otimes_k \bar{k}$ are precisely $\bar{k}^*$---this can be seen using the same argument that was employed in the proof of Theorem \ref{thrm:asgrufd}.  Since $A\otimes_k \bar{k}$ is an FFD, we necessarily have that there exist two factorizations $a=p_1q_2=p_2q_2$ such that $p_1$ is not a unit of $A$ multiplied by $p_2$ and such that $p_1\otimes 1$ and $p_2\otimes 1$ are $\bar{k}^*$-multiples of each other.  In particular, we have
$(p_1\otimes c)=p_2\otimes 1$ for some $c\in \bar{k}^*$.  But we have that $A\otimes_k \bar{k}$ is a free $A$-module with basis given by $\{1\otimes \alpha\}$ as $\alpha$ runs over a $k$-basis for $\bar{k}$.  In particular, if $p_1\otimes c=p_2\otimes 1$ then we must have $c\in k$ and so $p_1c=p_2$, a contradiction.  The result follows.  
\end{proof}
\begin{remark}
We just showed that if $A\otimes_k \bar{k}$ is an FFD then $A$ is an FFD.  In general if $A$ is an FFD then it does not in general follow that $A\otimes_k \bar{k}$ is an FFD.  For example, if one takes takes $A=\mathbb{Q}[\sqrt{2}]$ then $A$ is an FFD (in fact it is a UFD).  But $A\otimes_{\mathbb{Q}} \mathbb{Q}(\sqrt{2})$ is not since $\sqrt{2}\otimes 1 - 1\otimes \sqrt{2}\neq 0$ is a zero divisor in the tensor product, since $0=(\sqrt{2}\otimes 1 - 1\otimes \sqrt{2})(\sqrt{2}\otimes 1 + 1\otimes \sqrt{2})$. 
%{\bf Viktor: have a look on the concrete example with the tensor with the alg. closure, it is Ex. \ref{ex:EasyNonFFD} }.\\
Even in the case when $A$ is an FFD and $A\otimes_k \bar{k}$ is a domain, we need not have that $A\otimes_k \bar{k}$ is an FFD.  As an example, if one considers Example \ref{rem: RatWeyl}, the factorizations are enumerated by the elements of the field.  If one begins with a finite base field $k$, then the resulting algebra will necessarily be an FFD.  If, however, we extend scalars up to $\bar{k}$ then the resulting algebra will still be a domain but will not be an FFD since $\bar{k}$ is infinite and thus there exist elements with infinitely many distinct factorizations.  It is interesting to know whether such behaviour can happen when one works with an infinite base field $k$.  
\end{remark}

\section{The broad class of algebras}
In this section, we give a proof of the applications claimed in Theorem \ref{thm: main}.  To complete this proof, we require a lemma that enables us to show under reasonable conditions that certain subalgebras of FFDs are FFDs.  

\begin{lemma}
Let $k$ be a field, let $A$ and $B$ be $k$-algebras with $A\subseteq B$.
If $B$ has a finite-dimensional filtration $V$, such that $\grdr_{V}(B)\otimes_k \bar{k}$
is a domain, then $A$ is a finite factorization domain over $k$.
\label{subring}
\end{lemma}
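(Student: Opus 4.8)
The plan is to reduce the statement to Corollary~\ref{thm: main2} by transporting the filtration on $B$ down to the subalgebra $A$. Since $B = \bigcup_n V_n$, I would set $W_n := V_n \cap A$ for each $n \in \NN$. Then $A = \bigcup_n W_n$; each $W_n$ is finite-dimensional, being a $k$-subspace of the finite-dimensional space $V_n$; $W_n \subseteq W_m$ whenever $n \le m$; and $W_n W_m \subseteq V_{n+m} \cap A = W_{n+m}$. Hence $W = \{W_n : n\in\NN\}$ is a finite-dimensional $\NN$-filtration on $A$.

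The next step is to identify $\grdr_W(A)$ with a graded $k$-subalgebra of $\grdr_V(B)$. For each $n$, the inclusion $V_n \cap A \hookrightarrow V_n$ sends $W_{n-1} = V_{n-1}\cap A$ into $V_{n-1}$, and since $(V_n\cap A)\cap V_{n-1} = V_{n-1}\cap A = W_{n-1}$ it induces an injection $W_n/W_{n-1} \hookrightarrow V_n/V_{n-1}$ on the $n$-th graded components. Assembling these maps gives an injective homomorphism of graded $k$-algebras $\grdr_W(A) \hookrightarrow \grdr_V(B)$. Since extension of scalars along $k \hookrightarrow \bar k$ is exact, we obtain an injection $\grdr_W(A)\otimes_k \bar k \hookrightarrow \grdr_V(B)\otimes_k \bar k$; the target is a domain by hypothesis, so the source is a domain as well.

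Thus $A$ is a $k$-algebra equipped with a finite-dimensional filtration $W$ whose associated graded algebra becomes a domain after base change to $\bar k$. Corollary~\ref{thm: main2} then applies verbatim and yields that $A$ is a finite factorization domain over $k$, which is exactly the claim. Note that one does not, and need not, assume in advance that $A$ is a domain: being a domain is already part of the conclusion of Corollary~\ref{thm: main2}.

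The argument is essentially bookkeeping, and I do not anticipate a genuine obstacle. The one point that merits care is the injectivity of $\grdr_W(A) \to \grdr_V(B)$ --- equivalently, that no nonzero homogeneous element of the associated graded of $A$ becomes zero in that of $B$ --- which comes down precisely to the identity $(V_n\cap A)\cap V_{n-1} = V_{n-1}\cap A$ used above; this, together with the flatness of $-\otimes_k \bar k$, is all that is really needed.
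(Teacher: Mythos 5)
Your argument is correct and follows the paper's own proof essentially verbatim: define $W_n = V_n\cap A$, observe this is a finite-dimensional filtration, embed $\grdr_W(A)$ into $\grdr_V(B)$, tensor with $\bar k$ using flatness, and invoke Corollary~\ref{thm: main2}. The only difference is that you spell out the injectivity of $\grdr_W(A)\to\grdr_V(B)$ via the identity $(V_n\cap A)\cap V_{n-1}=W_{n-1}$, which the paper simply asserts.
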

\begin{proof}
Define a filtration $W$ on $A$ as follows: for $n\in\NN$, let $W_n := V_n \cap A$.  Since $V_n$ is finite-dimensional $k$-vector space, $W_n$ is finite-dimensional
as well. Moreover, let $v \in W_i, w \in W_j$, then $vw \in W_{i+j}$ since 
$v \in V_{i} \cap A$, $w \in V_{j} \cap A$ and thus $vw \in V_{i+j} \cap A$. Moreover, the induced map from $\grdr_{W}(A)$ to $\grdr_{V}(B)$ is injective. Since $\bar{k}$ is flat as a $k$-module, we then have an injection of $\bar{k}$-algebras 
$ \grdr_{W}(A)\otimes_k \bar{k}$ into $\grdr_{V}(B)\otimes_k \bar{k}$.  By assumption, $ \grdr_{V}(B)\otimes_k \bar{k}$ is a domain and hence $ \grdr_{W}(A)\otimes_k \bar{k}$ is a domain and hence by Theorem \ref{thm: main2} we have that $A$ is an FFD.
\end{proof}
\begin{remark}
\label{rem: FFDfield}
%The FFD property, as we introduced it, does really depend on the field. 
The converse of the Lemma does not hold. For example, let $A$ be a
commutative domain and a $k$-algebra that is not an FFD.  Moreover,
let $B$ be its field of fractions, then $B$ is vacuously an FFD since there are no nonzero non-unit elements in $B$.
%A concrete example of such an $A$ can be as follows. Let $k$ be a field
%and $T$ the additive submonoid of $\QQ^{+}$ generated by
%$\{\frac{1}{2}, \frac{1}{3}, \frac{1}{5}, \ldots, \frac{1}{p_j}, \ldots\}$,
%where $p_j$ is the $j$th prime for $j \in \NN$. Then the monoid domain
%$k[X|T] := \{\sum a_tX^t \mid a_t \in k \text{ and } t \in T\}$ is not
%an FFD due to \citep[Example 2.1]{anderson1990factorization} and the
%results on the relation between FFDs and
%what Anderson et al. call bounded factorization domains, which are summarized in the same paper.
\end{remark}

\begin{proof}[Proof of Theorem \ref{thm: application}] From remarks in \S 2 we see that all of the algebras (except for $G$-algebras and polynomial extensions) described have finite-dimensional filtrations whose associated graded algebras are domains.  In \S2, we also observed that all of the rings given in the list remain domains under extension of scalars and hence by Corollary \ref{thm: main2} we see that all of these algebras have the property that nonzero elements have only finitely many different factorizations up to multiplication of factors by scalars.  Hence we see that subalgebras of these algebras are also FFDs by Lemma \ref{subring}.  For $G$-algebras, we note that Theorem 2.3. of \citep{GomezRefiltering} shows that there exists a Noetherian ordering on $\NN$, satisfying the condition of the Definition \ref{def: G-algebra} and a finite-dimensional $\NN$-filtration $\mathcal{L}$ on $A$ with $\mathcal{L}_0=k$, such that ${\rm gr}_{\mathcal{L}} (A) \cong \mathcal{O}_{\bf q}(k^n)$ for some units $q_{ij}$.  Since $\mathcal{O}_{\bf q}(k^n)\otimes_k \bar{k}\cong \mathcal{O}_{\bf q}(\bar{k}^n)$ is a domain, we see that a $G$-algebra is an FFD by Corollary \ref{thm: main2}.  Finally, for polynomial rings, over these algebras, it is straightforward to see that by choosing an appropriate filtration we will again obtain an associated graded domain that behaves well with respect to extension of scalars.
\end{proof}

The algebras corresponding to common linear partial operators deserve special attention due to their ubiquity in applications.  The following result follows directly from Theorem \ref{thm: application}.

\begin{corollary} 
\label{cor: OpAlg}
For the following linear partial operators $\mathfrak{o}$,
the corresponding algebras of operators over $k[x]$, as well as finitely many tensor products of them over $k$ are finite factorization domains. Below, 
 $q\in k^*$ is fixed and $f(x)\in k[x]$.
\begin{enumerate}
\item $\mathfrak{o}(f) = \tfrac{\d f(x)}{\d x}$, the differential operator, gives rise to the first Weyl algebra $k[x]\langle \d \colon \d x = x \d + 1\rangle$;
\item $\mathfrak{o}(f) = f(x+1)$, the shift operator, gives rise to the first shift algebra $k[x]\langle s \colon s x = x s + s\rangle$;
\item $\mathfrak{o}(f) = f(q \cdot x)$, the $q$-shift or $q$-dilation operator, gives rise to the first $q$-shift algebra $k[x]\langle s_q \colon s_q x = q x s_q \rangle$;
\item $\mathfrak{o}(f) = \tfrac{f(qx) - f(x)}{(q-1)x}$, the $q$-differential operator, gives rise to the first $q$-Weyl algebra $k[x]\langle \d_q \colon \d_q x = q x \d_q + 1\rangle$;
\item $\mathfrak{o}(f) = \int_0^x f(t) dt$, the integration operator, gives rise to the graded algebra $k[x]\langle I : I x = x I -I^2 \rangle$.
\end{enumerate}
\end{corollary}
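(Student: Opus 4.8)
The plan is to deduce everything directly from Theorem~\ref{thm: application}, whose statement already guarantees that $G$-algebras over $k$ are finite factorization domains. Concretely, I would (i) recognise each of the five operator algebras as a $G$-algebra in the sense of Definition~\ref{def: G-algebra}, and (ii) invoke the closure of the class of $G$-algebras under tensor products over $k$, noted in \S2, to handle finitely many tensor products of them.

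For step (i), in each of the cases (1)--(5) the algebra is generated by two elements, $x_1 := x$ and $x_2 := \mathfrak{o}$ with $\mathfrak{o}\in\{\d,\,s,\,s_q,\,\d_q,\,I\}$, and the defining relation is already written in the form $x_2 x_1 = c_{12}\,x_1 x_2 + d_{12}$ with $c_{12}\in k^*$ and $d_{12}\in k[x_1,x_2]$: one reads off $(c_{12},d_{12})=(1,1)$ in case (1), $(1,x_2)$ in case (2), $(q,0)$ in case (3), $(q,1)$ in case (4), and $(1,-x_2^2)$ in case (5). It then remains to exhibit a monomial well-ordering $\prec$ with $\lm(d_{12})\prec x_1 x_2$; the degree-lexicographic order with $x_1\succ x_2$ works in every case, the only point to check being that in case (5) one has $\lm(d_{12})=x_2^2\prec x_1 x_2$, which holds because $x_2^2$ and $x_1 x_2$ have the same total degree while $x_1 x_2$ is larger lexicographically. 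Together with the fact that $\{x_1^a x_2^b : a,b\in\NN\}$ is a $k$-basis of each of these algebras (for case (5) this is a routine Diamond Lemma argument, there being no overlap or inclusion ambiguities for the single rewriting rule $x_2 x_1\rightsquigarrow x_1 x_2 - x_2^2$), this shows each of (1)--(5) is a $G$-algebra.

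For step (ii), by the remark in \S2 that $A\otimes_k B$ is a $G$-algebra whenever $A$ and $B$ are $G$-algebras over $k$ (see \citep{LVDiss}), an immediate induction shows that any tensor product over $k$ of finitely many algebras from the list (1)--(5) is again a $G$-algebra. Applying Theorem~\ref{thm: application} to each of these $G$-algebras then yields the corollary. The argument is essentially bookkeeping; the only place that is not completely automatic is the choice of term order in case (5), where the correction term $d_{12}=-x_2^2$ has the same total degree as $x_1 x_2$, so that an unrefined total-degree filtration would fail to separate the leading monomials and one must break the tie in favour of $x_1$, exactly as the degree-lexicographic order does.
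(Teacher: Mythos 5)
Your proposal is correct and fills in precisely the details the paper leaves implicit when it says the corollary ``follows directly from Theorem~\ref{thm: application}'': cases (1)--(4) appear verbatim in the explicit list of that theorem, and your verification that each of the five algebras (in particular the integration algebra, where the degree-tie $x_2^2$ vs.\ $x_1x_2$ must be broken lexicographically with $x_1\succ x_2$) is a $G$-algebra, together with the tensor-product closure from \S2, gives exactly the intended reduction. This is essentially the paper's own argument, written out.
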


Although the commutative domains has been extensively studied \citep{anderson1990factorization, anderson1992elasticity,anderson1996finite, anderson1997factorization}, we give a concise proof of the FFD property for polynomial rings using our methods.

\begin{proposition} 
Let $k$ be any field. Then $k[z_i \colon i\in\NN]$ and $k[z_i \colon i\in\ZZ]$ are FFDs.
\end{proposition}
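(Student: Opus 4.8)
The plan is to reduce to the classical fact that a polynomial ring in finitely many variables over a field is a unique factorization domain, and hence --- by the remark following Definition~\ref{def: FFD} --- an FFD. The key observation is that $R:=k[z_i\colon i\in\NN]$ is the directed union of the subrings $R_m:=k[z_1,\dots,z_m]$, and that a factorization of an element cannot involve any variable that does not already occur in that element; this confines every factorization to one of the finitely-generated subrings, where the classical theory applies. Note that the finite-dimensional-filtration machinery of Theorem~\ref{thm: main} (or Corollary~\ref{graded}) does not apply directly, since the total-degree grading on $R$ has infinite-dimensional homogeneous components, so an elementary argument is the natural route.

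First I would record the elementary facts that $R$ is a domain and that the units of $R$, as well as the units of each $R_m$, are precisely $k^*$; in particular they coincide, which is what lets one compare irreducibility and factorizations across these rings. Next I would establish the central claim: if $f\in R$ is nonzero and $f=g_1\cdots g_r$ in $R$, then every variable occurring in any $g_i$ already occurs in $f$. It suffices to treat $r=2$ and then induct on $r$. If some $z_j$ occurred in $g_1$ but not in $f$, then, viewing $R$ as the polynomial ring in the single variable $z_j$ over the domain $k[z_i\colon i\neq j]$ and comparing degrees in $z_j$, one gets $0=\deg_{z_j}(f)=\deg_{z_j}(g_1)+\deg_{z_j}(g_2)\ge \deg_{z_j}(g_1)\ge 1$, a contradiction (and symmetrically for $g_2$). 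Consequently, whenever $f\in R_m$, any factorization of $f$ in $R$ already takes place inside $R_m$; combined with the coincidence of non-units, this shows that an element of $R_m$ is irreducible in $R_m$ if and only if it is irreducible in $R$, and hence that the factorizations of $f$ into irreducibles of $R$ are exactly the factorizations of $f$ into irreducibles of $R_m$.

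To conclude, given a nonzero non-unit $f\in R$, choose $m$ with $f\in R_m$. Since $R_m$ is a UFD, $f$ has at least one factorization into irreducibles of $R_m$, and --- as that factorization is unique up to reordering and units $k^*$ --- only finitely many distinct ones up to multiplication of the factors by central units. By the previous step these are precisely the factorizations of $f$ into irreducibles of $R$, so $f$ has at least one and at most finitely many factorizations in $R$ up to multiplication of factors by central units; hence $k[z_i\colon i\in\NN]$ is an FFD. Finally, $k[z_i\colon i\in\ZZ]$ is isomorphic as a $k$-algebra to $k[z_i\colon i\in\NN]$ via any bijection of the index sets, so it is an FFD as well. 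The only genuinely delicate point is the ``no new variables in a factorization'' claim; everything else is bookkeeping, and that claim is dispatched cleanly by the single-variable degree argument over the coefficient domain $k[z_i\colon i\neq j]$.
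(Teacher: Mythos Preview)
Your proof is correct, but it takes a genuinely different route from the paper's. You assert that the finite-dimensional-filtration machinery does not apply because the total-degree grading on $R$ has infinite-dimensional homogeneous components; the paper, however, sidesteps this by choosing a \emph{weighted} grading, setting $\deg(z_i)=i$ (with indices $i\ge 1$). Under this grading the degree-$n$ homogeneous component is spanned by monomials $z_{i_1}^{a_1}\cdots z_{i_r}^{a_r}$ with $\sum_j a_j i_j=n$, and there are only finitely many of these (essentially the partitions of $n$), so each component is finite-dimensional and Corollary~\ref{thm: main2} applies directly. Your approach, by contrast, is entirely elementary: it reduces to the UFD property of $k[z_1,\dots,z_m]$ via the observation that factors cannot introduce new variables, which your single-variable degree argument establishes cleanly. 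What you gain is independence from the paper's main theorem and its algebraic-geometric ingredients; what the paper's proof gains is a demonstration that its machinery handles even this infinitely-generated example once the grading is chosen well. Both arguments treat the $\ZZ$-indexed case identically, via a bijective reindexing.
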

\begin{proof}
Let us view $k[z_1, z_2, \ldots]$ as $\NN$-graded algebra with $\deg(z_i)=i$. Then each graded piece is finite-dimensional over $k$ and it is straightforward to see that the algebras behave well under extension of scalars.  Hence by the Corollary \ref{thm: main2}, the algebra is an FFD.  Since $k[z_i \colon i\in\ZZ]$ and $k[z_i\colon i\in \NN]$ are isomorphic as $k$-algebras, 
$k[z_i \colon i\in\ZZ]$ is an FFD as well.
\end{proof}
%% According to our definition, the algebra $k[z_i^{\pm 1} \colon i\in\NN]$ is not an FFD over $k$, since, for example, the non-unit $z_1^2-1$ has the factorizations $a_i b_i$ where $a_i=(z_1-1)z_2^i$ and $b_i=(z_i+1)z_2^{-i}$.  We note that up to multiples of $k^*$ these factorizations are all distinct.  

%We can define generalized FFD by allowing the factorizations, distinct up to central units. %If one, however, only considers factorizations up to units 
%% Then $k[z_i^{\pm 1} \colon i\in\NN]$ is indeed a generalized FFD, since
\begin{corollary}
  Let $k$ be any field. Then $k[z_i^{\pm 1} \colon i\in\NN]$ and
  $k[z_i^{\pm 1} \colon i\in\ZZ]$ are FFDs.
\end{corollary}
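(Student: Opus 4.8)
The plan is to deduce this from the preceding Proposition, which establishes that $R:=k[z_i\colon i\in\NN]$ is an FFD, by exploiting the fact that $S:=k[z_i^{\pm 1}\colon i\in\NN]$ is the localization of $R$ obtained by inverting the variables $z_i$, each of which is a prime element of $R$ (since $R/z_iR\cong k[z_j\colon j\neq i]$ is a domain). First I would identify the units of $S$: since every element of $S$ involves only finitely many of the $z_i$, the standard description of units in a Laurent polynomial ring over a field shows that the units of $S$ are exactly the elements $c\prod_i z_i^{a_i}$ with $c\in k^*$ and only finitely many $a_i\in\ZZ$ nonzero. Next I would set up a normal form: call $g\in R$ \emph{$z$-primitive} if $z_i\nmid g$ in $R$ for every $i$; then every nonzero $f\in S$ can be written as $f=ug$ with $u$ a unit of $S$ and $g\in R$ $z$-primitive, and this expression is unique up to $k^*$. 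Existence follows by clearing denominators so as to land in $R$ and then dividing out the largest monomial dividing the result; uniqueness is a short computation from the primeness of the $z_i$. I would also record the closure property, again immediate from primeness, that a product of $z$-primitive elements of $R$ is $z$-primitive.

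With the normal form in place, the heart of the argument is a bijection, compatible with the equivalence relation in Definition \ref{def: FFD}, between the factorizations of $f=ug$ into irreducibles in $S$ and the factorizations of $g$ into irreducibles in $R$. The ingredients are: (i) a $z$-primitive non-constant $q\in R$ is irreducible in $S$ if and only if it is irreducible in $R$; (ii) every irreducible of $S$ is, up to a unit of $S$, a $z$-primitive irreducible of $R$, and since $g$ is $z$-primitive every irreducible factor of $g$ in $R$ is $z$-primitive; and (iii) given a factorization $f=\pi_1\cdots\pi_m$ into irreducibles of $S$, writing $\pi_\ell=w_\ell q_\ell$ with $w_\ell$ a unit and $q_\ell$ $z$-primitive, the product $q_1\cdots q_m$ is $z$-primitive, so uniqueness of the normal form forces $g$ to be a $k^*$-multiple of $q_1\cdots q_m$, giving a factorization of $g$ in $R$; conversely a factorization of $g$ in $R$ yields one of $f$ in $S$ by absorbing $u$ into a single factor.

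Verifying that this correspondence is well defined and bijective on equivalence classes — that two $S$-factorizations differing by central units map to $R$-factorizations differing by elements of $k^*$, and conversely — is where the bookkeeping is heaviest, and I expect it to be the main obstacle; it reduces each time to the observation that a $z$-primitive element can be a scalar-times-monomial multiple of another $z$-primitive element only when the monomial is trivial. Granting this, since $g$ is a nonzero non-unit of $R$ (it is $z$-primitive and non-constant because $f$ is not a unit of $S$) and $R$ is an FFD, $g$ has at least one and only finitely many factorizations into irreducibles up to the relevant equivalence, and hence so does $f$; thus $S$ is an FFD. Finally, $k[z_i^{\pm 1}\colon i\in\ZZ]$ is isomorphic as a $k$-algebra to $S$ via a reindexing of $\ZZ$ by $\NN$, so it is an FFD as well.
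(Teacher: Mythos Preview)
Your proposal is correct and follows essentially the same approach as the paper: write each element of the Laurent ring as a unit times a polynomial and reduce to the FFD property of $k[z_i\colon i\in\NN]$ established in the preceding Proposition. The paper's proof is a two-line sketch of exactly this idea, while you supply the careful normal form (``$z$-primitive'' representative) and the bijection-of-factorizations bookkeeping that the paper leaves implicit.
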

\begin{proof}
Any non-unit element of $k[z^{\pm 1}_{i} \colon i\in\NN]$ (resp. $k[z^{\pm 1}_{i} \colon i\in\ZZ]$) can be written uniquely as
\[
\left( \prod_{{\rm finite}} z_i^{-k_i}\right) \cdot p(z), \quad k_i\in\NN, \quad p(z) \in k[z_{i }\colon i\in\NN].
\]
Thus, since $k[z_i \colon i\in\NN]$ (resp $k[z_{i} \colon
  i\in\ZZ]$) is an FFD and $z_{ i}^{-1}$ are units, we obtain the
desired result. %in this alternative sense.
\end{proof}

\section{Examples of non-FFDs and Remarks}
In this section we give some examples of non-FFDs, which will be useful in giving a fuller understanding of the FFD property.  We begin with a proposition which gives examples of $\mathbb{N}$-graded domains (over any field) that are not FFDs.  We note, however, that these rings all have some infinite-dimensional homogeneous components and hence these examples do not contradict Theorem \ref{thm: main} in the case when the base field is algebraically closed.
%% Viktor: reformulated
%\begin{lemma} Let $k$ be a field, let $\bar{k}$ be the algebraic closure of $k$, and let $A$ be $k$-algebra.  If nonzero elements of $A\otimes_k \bar{k}$ have only finitely many distinct factorizations up to multiplication of factors by scalars then $A$ is an FFD.
%\label{lem: tensor}
%\end{lemma}
%The question, whether a subalgebra of an FFD is an FFD itself, is more subtle as it might seem at the first glance.  Let us present nontrivial examples of both finitely and infinitely generated $k$-algebras, which are not FFDs.

\begin{proposition}
Let $k$ be any field. Consider $R=k[u][\{x_n, y_n\colon n\in\ZZ\}]$, a polynomial ring in infinitely many commuting variables. Let $\sigma: R\to R$ be the automorphism $x_n \mapsto x_{n+1}, y_n\mapsto y_{n+1}, u\mapsto u$. Moreover, let  
$A=R[t,t^{-1};\sigma]$ be the Laurent Ore extension of $A$; that is, for every
$r \in R$ we have the commutation rules $t^{\pm 1} \cdot r = \sigma^{\pm 1}(r)\cdot t^{\pm 1}$ and $t t^{-1}=t^{-1}t = 1$.  
Then the following $k$-algebras are not FFDs. 
\begin{itemize}
\item[a)] $R/ \langle \{ x_i y_i -u \colon i\in \ZZ \} \rangle$, which is an infinitely generated $k$-algebra.
\item[b)] $A/{}_{A} \langle x_0 y_0 - u \rangle_A$, which is a finitely generated $k$-algebra.
\end{itemize}
\end{proposition}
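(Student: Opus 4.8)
The plan is to exhibit, in each algebra, a single element that admits infinitely many pairwise non-associate factorizations, where "associate" means "differs by a central unit." The natural candidate is the element $u$ (or its image), together with the relations $x_i y_i = u$. The key idea is to build a one-parameter (indeed infinite) family of factorizations of $u$ out of these relations and the shift symmetry, and then to argue that the factors cannot be matched up to central units. I would begin by determining the center and the units of each algebra, since the whole notion of FFD is relative to multiplication by \emph{central} units; for the quotient in (a) the ring is still a commutative polynomial-type algebra (a polynomial ring in $u$ and the $x_i$, with $y_i$ eliminated via $y_i = u/x_i$ — more precisely the localization picture should be handled carefully), so its units are just $k^*$ and everything central; for the Laurent Ore extension in (b) I would check that the units are generated by $k^*$ together with $t$, and that $t$ is \emph{not} central (it conjugates $x_0$ to $x_1$), so the central units are again just $k^*$.

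For part (a): in $R/\langle x_i y_i - u : i \in \ZZ\rangle$ we have $u = x_0 y_0 = x_1 y_1 = x_2 y_2 = \cdots$, giving infinitely many factorizations $u = x_i y_i$. The step that needs care is showing these are genuinely distinct \emph{up to associates}: since the only central units are in $k^*$, two factorizations $u = x_i y_i = x_j y_j$ are the same only if $x_i = \lambda x_j$ for some $\lambda \in k^*$, and this fails for $i \neq j$ because the images of the $x_i$ remain $k$-linearly independent (indeed algebraically independent) in the quotient. One must also confirm that each $x_i$ is irreducible and a non-unit in the quotient, so that these really are factorizations into irreducibles (or at least witness the failure of the finite-factorization condition — note the definition only requires \emph{some} factorization into irreducibles plus finiteness, so it suffices to produce infinitely many distinct factorizations into \emph{two} non-units, since each then refines to a factorization into irreducibles and distinctness is preserved). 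So the cleanest route is: produce infinitely many distinct two-factor factorizations $u = x_i y_i$ with all factors non-units, invoke that factorizations into irreducibles exist by refining, and observe that distinct $x_i$'s stay non-associate.

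For part (b): here only the single relation $x_0 y_0 = u$ is imposed, but the automorphism $\sigma$ is available, and $t$ conjugates $x_0 y_0 - u$ to $x_n y_n - u$, so in $A/{}_A\langle x_0 y_0 - u\rangle_A$ the two-sided ideal actually forces $x_n y_n = u$ for \emph{all} $n \in \ZZ$ (because $u$ is $\sigma$-fixed). Thus we recover the same family $u = x_n y_n$ of factorizations, now inside a finitely generated algebra. The main obstacle — and the part I would be most careful about — is controlling this quotient well enough to know that the $x_n$ remain non-units, remain pairwise non-associate, and that the center/units are as claimed; concretely one wants a normal-form or PBW-type description of $A/{}_A\langle x_0 y_0 - u\rangle_A$ showing that it contains the commutative ring $k[u][x_n, y_n : n \in \ZZ]/\langle x_n y_n - u : n\in\ZZ\rangle$ from part (a) as the "$t$-degree-zero" piece, so that the non-associativity argument from (a) transfers. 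Once that structural fact is in hand, the conclusion is immediate: infinitely many distinct factorizations $u = x_n y_n$, hence not an FFD, and by part (a) the same for the infinitely generated quotient.
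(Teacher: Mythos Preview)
Your high-level strategy is exactly the paper's: exhibit the family $u = x_i y_i$ and argue these factorizations are pairwise non-associate. The genuine gap is that you never establish---or even flag as a goal---that $R/J$ is a \emph{domain}. This is not a detail you can defer: the FFD notion is defined only for domains (so without it ``not an FFD'' is vacuous or uninteresting); your claim that the units are exactly $k^*$ depends on it (in an $\NN$-graded ring with zero divisors, elements like $1+\text{nilpotent}$ can be units outside degree zero); and the irreducibility of the $x_i$, indeed their nonvanishing in the quotient, depends on it. Your parenthetical ``a polynomial ring in $u$ and the $x_i$, with $y_i$ eliminated via $y_i = u/x_i$'' is not a description of $R/J$---the $x_i$ are not invertible there---and your acknowledgement that ``the localization picture should be handled carefully'' is precisely where all the content lies, yet you give no plan for it.

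The paper does this work explicitly: it shows $G=\{x_iy_i-u\}$ is a Gr\"obner basis (coprime leading monomials), deduces that $R/J$ has no $k[u]$-torsion, and then proves that the localization of $R/J$ at $k[u]\setminus\{0\}$ is isomorphic to the Laurent polynomial ring $k(u)[z_n^{\pm 1}:n\in\ZZ]$, which is manifestly a domain; hence so is the subring $R/J$. Once that is in hand, everything in your outline is correct, and for part (b) your plan---conjugate $x_0y_0-u$ by powers of $t$ to obtain all the relations $x_ny_n-u$, then identify the quotient with $(R/J)[t,t^{-1};\sigma]$ so that part (a) transfers---matches the paper's argument exactly.
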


\begin{proof}
{\bf a)}. Let $G:=\{ x_i y_i -u \colon i\in \ZZ \} \subset R$ and $J:=\langle G \rangle \subset R$. Then $G$ is a Gr\"obner basis of $J$ with respect to any monomial well-ordering $\prec$ on $R$, such that $u\prec x_i, y_j, x_k y_k$. In particular, $\prec$ can be taken to be an elimination ordering for $x_i, y_i$.
%{\bf Viktor: it makes sense to recall this property}. 
Then the leading monomial of a generator from $G$ is $x_i y_i$ and hence, by the product criterion \citep{buchberger1979criterion}, $G$ is a a Gr\"obner basis of $J$ with respect to $\prec$. Now, suppose that there exists some $p\in k[u]\setminus\{0\}$, such that 
 $p(u) \cdot (r+J)=0 \in R/J$. We can assume that a representative $r$ is already completely reduced with respect to $J$, that is no monomial of $r\neq 0$ is divisible by any $x_j y_j$. Then $p(u) r \in J \Rightarrow$ $\lm(p) \lm(r) \in \lm(J)=\langle { x_i y_i \colon i \in \ZZ } \rangle$, where $\lm(f)$ simply denotes the leading monomial of $f$.  Since 
 $\lm(p)\in k[u]\setminus\{0\}$, it follows that $\lm(r) \in \lm(J)$, which is a contradiction.
%  but this is impossible, since $\lm(r) \neq \lm(J)$ and $\lm(p)\in k[u]\setminus\{0\}$. 
  Thus we have demonstrated, that the $R$-module $R/J$ has no %$k[u]^{\times}$-torsion.
  $k[u]$-torsion.

This means, that the homomorphism of $k$-algebras from $R/J$ to the localization $S^{-1}(R/J)$ of $R/J$ with respect to $S=k[u]\setminus\{0\}$  is injective, that is we have an embedding
\[
k[u][\{x_n, y_n\colon n\in\ZZ\}]/\langle \{ x_i y_i -u \colon i\in \ZZ \} \rangle\]
into
\[ 
k(u)[\{x_n, y_n\colon n\in\ZZ\}]/\langle \{ x_i y_i -u \colon i\in \ZZ \} \rangle.
\]
Moreover, the latter algebra is an integral domain, since the homomorphism of $k(u)$-algebras 
\[
\varphi: k(u)[\{x_n, y_n\colon n\in\ZZ\}]/\langle \{ x_i y_i -u \colon i\in \ZZ \} \rangle
\to k(u)[\{z_n, z^{-1}_n\colon n\in\ZZ\}], 
\]
$\varphi(x_n)=z_n, \varphi(y_n) = u z^{-1}_n$, is an isomorphism.\\
Indeed, the surjectivity is clear since $$k(u)[\{ z_n, u z^{-1}_n \colon n\in\ZZ\}]=k(u)[\{z_n, z^{-1}_n\colon n\in\ZZ\}].$$ For injectivity, we use Gr\"obner bases for the computation of $\ker \varphi  $. Let $F := \{ z_i - x_i, uw_i - y_i, z_i w_i - 1 \colon i\in\ZZ\} \subset k(u)[\{x_n, y_n, z_n, w_n\colon n\in\ZZ\}]$ and $G_F$ be a 
Gr\"obner basis of $F$ with respect to a monomial ordering, eliminating $z_i, w_i$ (this means any monomial, not containing $z_i, w_i$ is smaller than any monomial, containing at least one of them).
Then, by e.g. Lemma 1.8.16 of \citep{GPS}, one has that $G_F\cap k(u) [\{x_n, y_n\colon n\in\ZZ\}]$ is a Gr\"obner basis of $\ker \varphi$.

Denote the polynomials in $F$ by $f_1, f_2, f_3$  for a fixed $i$. Then $f_3 \to_{f_2} uf_3 - z f_2 = y_i z_i -u \to_{f_1} x_i y_i -u =: f'_3$. Now
the leading monomials are $\{ z_i, w_i, x_i y_i \}$ and thus are pairwise coprime.
Moreover, for $j\neq i$ the corresponding monomials with index $j$ and
those with index $i$ are coprime. Hence, by the product criterion \citep{buchberger1979criterion}, 
$G_F =  \{ z_i - x_i, uw_i - y_i, x_i y_i -u \colon i\in\ZZ\}$ is a Gr\"obner basis and therefore the kernel is generated via a Gr\"obner basis $F'\cap k(u) [\{x_n, y_n\colon n\in\ZZ\}] = \{  x_i y_i -u \colon i\in\ZZ\}$, which is precisely the factor ideal
from the statement.  Thus, $R/J$ is an integral domain which is not an FFD, since the non-invertible polynomial $u$ has infinitely many factorizations $u = x_i y_i$ with $i\in\ZZ$. Note that $R/J$ can be viewed as $\NN$-graded algebra by assigning $\deg(u)=2, \deg(x_i)=\deg(y_i)=1$ and the arguments above carry over. Hence, with respect to this grading, $R/J$ is an example
of an $\NN$-graded domain, which is not an FFD. \\

{\bf b)} $A$ is canonically generated by $t, t^{-1}, u$ and $\{x_n, y_n\colon n\in\ZZ\}$. On the other hand,  for all $r\in R$ from the relation 
$t^{\pm 1} \cdot r = \sigma^{\pm 1}(r)\cdot t^{\pm 1}$ we conclude, that
$\sigma^{\pm 1}(r) = t^{\pm 1} r t^{\mp 1}$. In particular,
\[
x_{n+1} = t x_n t^{-1}, \ y_{n-1} = t^{-1} y_n t
\]
%and same formulas for $y_i$ hold as well. 
holds and hence $A$ is finitely generated as $k$-algebra by $t, t^{-1}, u , x_0, y_0$ subject to relations $\{ [u,x_0]=[u,y_0]=[x_0, y_0] = [t,u] = [t^{-1}, u] = [t^{-1}, t] = t^{-1}t - 1 =0 \}$, so it is finitely presented. Notably, in this presentation $t^{\pm 1}$ are free over $k[x_0,y_0]$ 
since the relation $t^{\pm 1} p(x_0,y_0) = \sigma^{\pm 1} ( p(x_0,y_0) ) t^{\pm 1}$, by expanding $\sigma(p) = t p t^{-1}$ on the right, becomes tautologic $t^{\pm 1} p(x_0,y_0) = t^{\pm  1} p(x_0,y_0)$. We will refer to
this presentation as $\star$ in what follows.\\
Now let $I$ be the two-sided ideal of $A$, generated by $\{x_0 y_0 - u\}$.
Then, on one hand, the $k$-algebra $A/I$ is finitely generated and finitely presented, since in the presentation $\star$ we have added just one relation. 
On the other hand, for all $i\in \ZZ$, $I \ni t^i (x_0 y_0 - u) t^{-i} = x_i y_i -u \in G$, where
$G$ was defined in (a) above. Since $G \subset R$ is $\sigma$-invariant and a Gr\"obner basis, the ideal $J = \langle G \rangle \subset R$ is $\sigma$-invariant. The same arguments show also the $\sigma^{-1}$-invariance. 
Thus $J$ is an $A$-submodule of $R$ and indeed
$\sigma$ induces an automorphism of an integral domain $R/J$. Then 
$(R/J)[t,t^{-1}; \sigma]$ is an integral domain as well. As we observe, 
$A/{}_{A} \langle x_0 y_0 - u \rangle_A = (R/J)[t,t^{-1}; \sigma]$ and the claim follows, since $u = x_i y_i, i\in\ZZ$ is an infinite family of factorizations.
Note, that $R/J$ can be viewed as $\NN$-graded domain. However, 
one can make $(R/J)[t,t^{-1}; \sigma]$ nontrivially graded only by considering
$\deg(t) = - \deg(t^{-1})$, thus only a $\ZZ$-grading exists.
%By selecting a monomial ordering $\prec$, prefering $x_i,y_i$ over $u$ we have even the following fact: $G=\{x_i y_i - u\colon i \in \ZZ\}$ is a graded two-sided Gr\"obner basis with respect to $\prec$ of both $RGR = RG \subset R$ (by the product criterion in commutive ring) and $AGA \subset A$. By the Gr\"obner argument
%we obtain, that no other generators in degree 2 can appear in $I$ and, 
%moreover, $u$ is not invertible over $R/J$ (suppose there exists $b\in R$, such
%that $bu=1 \mod J$. Since $u$ alone does not appear among leading monomials and we can assume that $b$ is reduced with respect to $J$, then $\lm(b)\notin \lm(J)$ but $\lm(bu) = u \lm(b) \in \lm(J)$, a contradiction.)
%Since $G\subset R$ is $\sigma$-invariant, then $J := RG$ is the smallest $\sigma$-invariant ideal of $R$ (and thus $A$-submodule of $R$) containing $R\{x_0 y_0 - u\}$, hence $AGA = I$ follows. 
\end{proof}

%%Viktor: reformulated
%\begin{lemma}
%Let $k$ be a field and let $A\subseteq B$ be $k$-algebras.  If $B$ is a domain such that nonzero elements of $B$ have only finitely many factorizations up to multiplication of factors by $k^*$ then $A$ is an FFD.
%\label{subring}
%\end{lemma}

\begin{example}
\label{ex:EasyNonFFD}
{\em 
Let us consider yet another example. Let $\sigma: \mathbb{C} \to \mathbb{C}$ be the complex conjugation automorphism and let $R = \mathbb{C}[t; \sigma]$. Then $R$ is a domain that it is not an FFD.  To see this, we observe that $t^2- 1$ has infinitely many factorizations 
$(t - e^{-i\phi})(t + e^{i\phi})$, where $\phi\in [0,2\pi)$.  The center of $R$ is easily checked to be the ring $\mathbb{R}[t^2]$ and so the only central units are elements of $\mathbb{R}^*$.  Thus we see that the factorizations given above are distinct up to multiplication by central units and so $R$ is not an FFD.

We give some additional observations:
\begin{enumerate}
\item[(i)] $R \cong \mathbb{R} \langle i, t \rangle/ \langle t i + i t, i^2+1 \rangle$ is a
finitely presented $\mathbb{R}$-algebra: the relations $i^2=-1$ and $i t=- t i$ allow us to write any element of $R$ in the form $p_0(t)+p_1(t)i$, with
$p_0(t), p_1(t)\in \mathbb{R}[t]$, and these are linearly independent
over $\mathbb{R}$ by definition of the skew polynomial ring. \\
%Hence $R=k\langle i, t \rangle/(ti+it,i^2+1)$ is a finite presentation of $R$.
%and the latter ideal is a two-sided Gr\"obner basis of itself (by a simple computation) in $k \langle i, \d \rangle$.\\
\item[(ii)]  Filtration/grading: from the relation $i^2 +1 = 0$ it follows, that $\deg(i)=0$ is the only possibility. Then $R$ is an $\NN_0$-graded domain with $\deg(t)$ being 1 and $R_n := \sum_{j=0}^n \mathbb{C} \cdot t^n$ is finite-dimensional both over $\mathbb{R}$.\\
\item[(iii)] $\mathbb{C} \langle y, t \rangle/ \langle t y + y t, y^2+1 \rangle$ is not a domain since $0 = y^2+1= (y-i)(y+i)$ holds. \\
\item[(iv)] There is an interpretation of (iii) in terms of tensor products: since both $R$ and $\mathbb{C}$ are $\mathbb{R}$-algebras, we may form the tensor product $D:=R \otimes_{\mathbb{R}} \mathbb{C} = R \otimes_{\mathbb{R}} \bar{\mathbb{R}}$. Taking $R$ in variables $y,t$ as above, we see that in $D$ we have $0 = (y^2+1)\otimes 1 = y^2\otimes 1 + 1\otimes 1 = (y\otimes 1 - 1 \otimes i)\cdot (y\otimes 1 + 1 \otimes i)$, and hence shows $D$ is not a domain. \\
\item[(v)] since $\sigma\mid_{\mathbb{R}} = id_{\mathbb{R}}$, there is a
subalgebra $T=\mathbb{R}[t;\sigma\mid_{\mathbb{R}}] = \mathbb{R}[t] \subset R$, which is an FFD.
\end{enumerate}}
\end{example}

%As we can see, without further assumptions a subdomain of an FFD is not an FFD. However, the following can be shown.

%viktor, new
\begin{remark}
\label{rem: RatWeyl}
In general, the property of not being an FFD does not pass to localizations, not even in
the commutative case (Remark \ref{rem: FFDfield}). In the noncommutative case, the property of being an FFD does not in general pass to localizations.  As an example, consider the first polynomial Weyl algebra $A_1$ over a field $k$, which is an FFD. However, its Ore localization with respect to $k[x]\setminus\{0\}$, denoted by $B_1$, is not an FFD.  It is straightforward to check that the central units of $B_1$ are precisely the elements of $k^*$ when $k$ has characteristic zero and are $k^* x^{p\mathbb{Z}}$ if $k$ has characteristic $p>0$.  Also, $B_1$ is generated by $\d$ over the transcendental field 
extension $\mathbb{C}(x)$ subject to the relation $\d g(x) = g(x) \d + \tfrac{\d g(x)}{\d x}$ for $g\in \mathbb{C}(x)$. A classical example shows, that 
\[
\forall (b,c)\in \mathbb{C}^2\setminus\{(0,0)\}, \quad 
\d^2 = \left( \d + \frac{b}{bx-c} \right) \cdot \left(\d - \frac{b}{bx-c} \right),
\]
thus there are infinitely many distinct factorizations in $B_1$ up to multiplication by central units. In
fact, one has a complete description of the form of all possible
factorizations of $\d^2$ in the equation above. To prove this, one considers a
factorization $\d^2 = \phi \cdot \psi$, where $\phi,\psi$ are in $A_1$
and not units. Then $\phi$ and $\psi$ are of
degree one in $\d$, and without loss of generality one can assume that
they are normalized, i.e. they have the form $\phi = \d + f$ and $\psi
= \d + g$ for $f,g \in k(x)$. Using a coefficient comparison of the
product $\phi \cdot \psi$, one can derive that $f = -g$ and that $f$
has to be a rational function solution of the ordinary differential
equation $\frac{\d f}{\d x} = -f^2$. Besides the trivial solution $f
\equiv 0$, the only possible other solution is given by $f(x)=
\frac{b}{bx-c}$ for some constants $(b,c) \in k^2\setminus\{(0,0)\}$.
\end{remark}

Via somewhat analogous examples, one can show that both rational shift
and $q$-shift algebras are not finite factorization domains either.

\begin{remark}
\label{rem: RatShift}
Also over the first rational shift algebra one encounters a phenomenon as before. This example was communicated to us by Michael Singer. 
Namely, let $(c_1, c_2) \in k^2\setminus\{(0,0)\}$. Then
\[
s^2 - 2(n+2) s + (n+2)(n+1) = 
\left(s - (n+2)\frac{c_1 n + c_2}{c_1 (n+1) + c_2}\right) \cdot
\left(s - (n+1)\frac{c_1 (n+1) + c_2}{c_1 n + c_2}\right).
\]
In particular, for $c_1=1$ and $c_2=c$ we obtain an example, somewhat similar to Example \ref{rem: RatWeyl}, namely
\[
s^2 - 2(n+2) s + (n+2)(n+1) = 
\left(s - (n+2)(1-\frac{1}{n+ c +1 }) \right)\cdot \left(s - (n+1)\left(1 + \frac{1}{n + c}\right) \right).
\]
\end{remark}

\begin{remark}
\label{rem: RatQShift}
The construction of an element with infinitely many factorizations in
the first rational $q$-shift algebra is similar to the one mentioned in
Remark \ref{rem: RatShift}.

Namely, let $(c_1, c_2) \in k^2\setminus\{(0,0)\}$. Then
\[
s_q^2 - (1+q) s_q + q = 
\left(s_q - \left(1 - \frac{c_2(1-q)}{c_1 x + c_2(n+1)}\right) \right) \cdot
\left(s_q - \left(q + \frac{c_2(1-q)}{c_1 x + c_2(n+1)}\right) \right).
\]
In particular, for $c_1=c-1$ and $c_2=1$ we obtain a similar easy example like in Remarks \ref{rem: RatWeyl} and \ref{rem: RatShift}, namely
\[
s_q^2 - (1+q) s_q + q = 
\left(s_q - \left(1 - \frac{1-q}{c x +1}\right) \right) \cdot
\left(s_q - \left(q + \frac{1-q}{c x +1}\right) \right).
\]
\end{remark}

\begin{remark}
  When applying the notion of similarity as used by Ore in
  \citep{ore1933} to
  characterize different possible divisors of a given
  element, the infinitely many factorizations presented in Remark
  \ref{rem: RatWeyl}, \ref{rem: RatShift} and \ref{rem: RatQShift} 
  are
  the same up to similarity. Moreover, it holds that the rational Weyl,
  the rational shift and the rational $q$-shift 
  algebra are
   unique
  factorization domains up to this notion. The examples
   illustrate
  that the theoretically interesting property of similarity ignores the sizes of coefficients of different
  normalized factors, which is inconvenient from a computational perspective.
\end{remark}

\section{Conclusion}

A proof of the finite factorization property of not necessarily
commutative $k$-algebras with filtration, where
the associated graded ring is a domain, has been presented in the case where the base field is either finite or algebraically closed. Moreover,
an upper bound on the number of possible factorizations of a given element has
been established. The proof applies to well-studied algebras such as the
Weyl algebras, the shift-algebras, coordinate rings of quantum affine spaces and
enveloping algebras of finite dimensional Lie algebras, and for these examples one does not require that the base field be algebraically closed.

As a consequence of the presented results, one can now begin to
develop systematic algorithmic ways to treat the problem of finding all factorizations of an element in these $k$-algebras. 
%new by VL
Indeed, some approaches to computation of factorizations were explored in the past. We have presented for the first time the crucial argument for the termination of a factorization procedure (e.~g. the one, based on ansatz) over these algebras, thus proving that a factorization algorithm exists and terminates. This opens a perspective towards algorithmic solution and numerous applications of factorizations. 
%VL more on apps : 1. solving
Classically, dealing with systems of linear partial functional equations with
variable coefficients, any factorization of an operator (a polynomial, representing an equation as above) provides some insight on its solutions via
an analogue of the partial cascade integration for differential equations. 
%critics of fctr algorithms
Notably, there are several ``algorithms" for computing factorizations of
elements of algebras of operators over, say, the field of rational functions. By
our result we claim, that these are merely \emph{procedures}, since in general these algebras are not FFDs, hence the output contains infinite data and thus the procedure will not terminate.

Suppose that in a $k$-algebra $R$, $p\in R$ possesses finitely many factorizations, say $p=f_i g_i, 1\leq i \leq m$.  Then for any left ideal $L \subset R$ one has $L + Rp \subseteq \cap_i (L + Rg_i)$. Especially in the cases where the inequality becomes equality one has some analogue of a primary decomposition. The possibility to construct ideals, containing the
given one, is useful for a number of situations.

% VL 2. ore localization
Factorization over a Noetherian domain $R$ can be used as a preprocessing 
to the factorization over an Ore localization $S^{-1}R$, where $S \subset R$ is
a multiplicatively Ore set. It is especially useful, when $R$ is an FFD, since 
$S^{-1}R$ will often lose this property as indicated in the paper. Moreover, 
practical computations over $S^{-1}R$ are generally more complicated than
those over $R$. By following the fraction-free strategy one can perform many
computations over $S^{-1}R$ by invoking only manipulations over $R$. In this respect, knowing that any $p\in R$ has finitely many factorizations and being able to compute them, one could
solve the localized problem: does there exist $s\in S, \tilde{p}\in R$, such that $p = s \tilde{p}$? If $p$ is from a left ideal $L$ and $p=s_1 \tilde{p}_1 = \ldots = s_m \tilde{p}_m$ are all factorizations of this shape, then $\tilde{p}_1, \ldots, \tilde{p}_m \in S^{-1} L$. This is also connected to the $S$-torsion of $R$-modules.

%VL more on apps :  primary-like decomp 
In \citep{GrigorievSchwartz:2007} the authors presented procedures for computing a primary decomposition and a Loewy-decomposition (a dual concept to the previous) of a module over the $n$-th Weyl algebra over an universal differential field. Having the algorithm for finite factorization for the $n$-th Weyl algebra with polynomial coefficients, one could give a generalization
of these decomposition techniques.  

A recent paper \citep{GHL14} presents a new powerful algorithm for
factorization of polynomials over the $n$-th Weyl algebra as well as
its implementation \texttt{ncfactor.lib} in the computer algebra system \textsc{Singular}.%, which is already used in important applications.

As part of future work, we see a great deal of potential that the bounds on the number of distinct factorizations can be optimized when considering specific $k$-algebras in isolation.
%new VL
Yet another problem of practical importance concerns the question on an
element $p$ from a domain, which is non-FFD: is there an algorithm, deciding whether $p$ has finitely many factorizations? Is there an upper bound for
the number of factorizations in the finite case? 

Another subject of future research is to generalize the
terminology and the results in \citep{anderson1990factorization, anderson1997factorization} to noncommutative domains. If extendable, the more refined categorization of integral domains with
respect to factorization properties of their elements might contribute towards a better understanding
of certain noncommutative algebras.

\section*{Acknowledgment}
We thank the referee for many helpful suggestions and remarks.
We are grateful to Michael Singer, Mark Giesbrecht, Dima
Grigoriev and Vladimir Bavula for the fruitful discussions we had with
them.
%\bibliographystyle{unsrtnat}
%\bibliography{bellheinlelevandovskyy} 

 \end{document}